\documentclass[a4paper]{amsart}
\usepackage{amsthm,amsmath,amssymb}
\setcounter{tocdepth}{2}
\usepackage{enumitem}
\usepackage{titlesec}
\usepackage{mathrsfs}
\usepackage{mathtools}
\usepackage{comment}
\usepackage{tcolorbox}
\titleformat{\section}{\normalfont\scshape\centering}{\thesection}{1em}{}
\titleformat{\subsection}{\bfseries}{\thesubsection}{1em}{}

\newtheorem{theorem}{Theorem}[section]

\newtheorem{lemma}{Lemma}[section]
\newtheorem{prop}{Proposition}[section]

\theoremstyle{remark}
\newtheorem{rem}{Remark}
\renewcommand{\Re}{\operatorname{Re}}
\renewcommand{\Im}{\operatorname{Im}}

\makeatletter
\@namedef{subjclassname@2020}{%
  \textup{2020} Mathematics Subject Classification}
\makeatother
\subjclass[2020]{Primary 11M06}
\keywords{Hardy's $Z$-function, Selberg class, Zeros}
\title[Hardy's $Z$-function associated with Selberg class]
{On Hardy's $Z$-function and its derivatives associated with the extended Selberg class}
\author{Hirotaka Kobayashi}
\date{}
\address{National Fisheries University, 2-7-1, Nagatahon-machi, Shimonoseki-shi, Yamaguchi 759-6595, Japan}
\email{h.kobayashi@fish-u.ac.jp}
\begin{document}

\begin{abstract}
    Hardy's $Z$-function $Z(t)$ is a real-valued function of the real variable $t$,
    and whose zeros correspond exactly to the zeros of the Riemann zeta-function on the critical line.
    In 2012, K.~Matsuoka showed that for every non-negative integer $k$,
    there exists a $T=T(k)>0$ such that $Z^{(k+1)}(t)$ has exactly one zero between consecutive zeros of $Z^{(k)}(t)$ for $t\ge T$ under the Riemann Hypothesis.
    In this paper, we extend Matsuoka's theorem to  $L$-functions in extended Selberg class.
\end{abstract}

\maketitle


\section{Introduction}

The purpose of this paper is to extend several results concerning Hardy's $Z$-function and its derivatives to generalised Hardy's $Z$-functions associated with $L$-functions in the extended Selberg class.
Let $s=\sigma+it$ be the complex variable.
Hardy's $Z$-function associated with the Riemann zeta-function $\zeta(s)$ is defined by
\[
Z(t)=e^{i\theta(t)}\zeta \left( \frac{1}{2}+it \right)
=\left( \pi^{-it}\frac{\Gamma(\frac{1}{4}+\frac{it}{2})}{\Gamma(\frac{1}{4}-\frac{it}{2})} \right)^{1/2}\zeta \left(\frac{1}{2}+it\right).
\]
We have $|Z(t)|=|\zeta(1/2+it)|$.
From the functional equation of the Riemann zeta-function,
it follows that $Z(t)$ is real-valued.
Therefore, zeros of $Z(t)$ coincide with those of $\zeta(s)$ on the critical line.
Accordingly, it is important to investigate the behaviour of $Z(t)$ and its derivatives.
Since many general $L$-functions are expected to satisfy analogues of the Riemann hypothesis (RH), it is natural to consider analogues of Hardy's $Z$-function for more general $L$-functions.

K.~Matsumoto and Y.~Tanigawa \cite{M-T} construct a meromorphic function $\eta_{k}(s)$,
whose zeros on the critical line coincide with those of $Z^{(k)}(t)$.
They prove that the number of zeros (counted with multiplicity as in what follows) of $\eta_{k}(s)$ in the rectangle $\{ s=\sigma+it \mid 1-2m<\sigma<2m,\ 0<t<T \}$ is
\[
\frac{T}{2\pi}\log \frac{T}{2\pi}-\frac{T}{2\pi}+O_{k}(\log T),
\]
where $m$ is a sufficiently large positive integer, and the index $k$ in the error term means that the implied constant depends on $k$.
Moreover, under the assumption of the Riemann hypothesis, except for finitely many zeros, zeros of $\eta_{k}(s)$ in the above rectangle are on the critical line $\sigma=1/2$.
This shows that RH for the Riemann zeta-function implies an analogue of RH for $\eta_k(s)$.

Later, K.~Matsuoka shows in his unpublished work \cite{M} that, assuming RH, for every non-negative integer $k$,
there exists a $T=T(k)>0$ such that $Z^{(k+1)}(t)$ has exactly one zero between consecutive zeros of $Z^{(k)}(t)$ for $t\ge T$.
For convenience, we refer to this result as {\it the Interlacing Theorem}.
In \cite{K}, the present author simplifies his proof by constructing an entire function $\xi_{k}(s)$ associated with $Z^{(k)}(t)$.

Two well-known classes of $L$-functions that include the Riemann zeta-function as a member are the Selberg class $\mathcal{S}$ and the extended Selberg class $\mathcal{S}^{\sharp}$.
As A.~Ivi\'{c} \cite[p.~51]{I} pointed out, the analogue of Hardy's $Z$-function for some $L$-functions in the Selberg class may be defined.
Indeed, A.~Mukhopadhyay, K.~Srinivas, and K.~Rajkumar \cite{M-S-R} introduce the analogue of Hardy's $Z$-function for functions $F$ in the Selberg class to show that under some suitable conditions, such functions $F$ have infinitely many zeros on the critical line.
R.~\v{S}im\.{e}nas \cite{Si} studies the distribution of zeros of the $k$th derivatives of $L$-functions in the extended Selberg class and obtains the zero-free regions for these derivatives and a Riemann--von Mangoldt-type estimate of the count of their nontrivial zeros.
Recently, S.~Chaubey, S.~S.~Khurana, and A.~I.~Suriajaya \cite{C-K-S} study the relation between RH for an $L$-function $F$ in the Selberg class and the distribution of zeros of derivatives of $F$.

The Interlacing Theorem implies that the existence of a positive local minimum or a negative local maximum of $Z^{(k)}(t)$ for sufficiently large $t$ would contradict RH.
Therefore, the Interlacing Theorem may provide useful insight into structural properties related to the validity of RH,
and it is natural to investigate which zeta- and $L$-functions satisfy analogous interlacing properties.
By generalising the construction of $\xi_k(s)$ in \cite{K}, the present paper extends the results of Matsumoto and Tanigawa and of Matsuoka from the Riemann zeta-function to $L$-functions in the extended Selberg class.

For $F \in \mathcal{S}^{\sharp}$, we denote its degree by $d_F$ and define a generalised Hardy's $Z$-function $Z_{F}(t)$ via the functional equation satisfied by $F$.
Precise definitions are recalled in Section 2.
We prove the following generalisation of 
the Interlacing Theorem to the extended Selberg class:

\begin{theorem}[Interlacing Theorem]\label{th_main}
  If RH for $F(s)$ is true, then for every non-negative integer $k$,
  there exists a $T=T(k)>0$ such that $Z_{F}^{(k+1)}(t)$ has exactly one zero between consecutive zeros of $Z_{F}^{(k)}(t)$ for $t\ge T$.
\end{theorem}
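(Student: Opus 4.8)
The plan is to recast the assertion ``exactly one zero of $Z_F^{(k+1)}$ between consecutive zeros of $Z_F^{(k)}$'' as a sign condition and then to verify that condition from the Hadamard factorisation made available by RH. Put $f=Z_F^{(k)}$ and
\[
W_k(t)=\bigl(Z_F^{(k+1)}(t)\bigr)^{2}-Z_F^{(k)}(t)\,Z_F^{(k+2)}(t),
\]
so that $\frac{d}{dt}\bigl(Z_F^{(k+1)}/Z_F^{(k)}\bigr)=-W_k/(Z_F^{(k)})^{2}$. I claim it is enough to prove $W_k(t)>0$ for all $t\ge T(k)$. Indeed, at a zero of $f$ one has $W_k=(f')^{2}$, so $W_k>0$ makes the zeros of $Z_F^{(k)}$ simple; at a zero of $f'=Z_F^{(k+1)}$ one has $W_k=-ff''$, so $W_k>0$ makes every critical point of $f$ a nondegenerate extremum at which $f$ and $f''$ have opposite signs. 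On an interval bounded by two consecutive zeros of $f$, where $f$ keeps one sign, all critical points are therefore of a single type (all maxima if $f>0$, all minima if $f<0$); since the critical points of a $C^{2}$ function necessarily alternate between maxima and minima, there is at most one of them in the interval. Together with Rolle's theorem, which already furnishes at least one zero of $Z_F^{(k+1)}$ there, this yields exactly one. Thus the theorem reduces to the positivity of $W_k$, equivalently to the strict decrease of $(\log Z_F^{(k)})'$ between consecutive zeros, for $t\ge T(k)$.

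For $k=0$ this positivity can be proved outright and exhibits the mechanism I intend to use in general. Writing $e^{i\theta_F(t)}:=\bigl(H(\tfrac12+it)\bigr)^{-1/2}$ so that $Z_F(t)=e^{i\theta_F(t)}F(\tfrac12+it)$, the reality of $Z_F$ forces $\Re\frac{F'}{F}(\tfrac12+it)=-\theta_F'(t)$ and hence $Z_F'/Z_F=-\Im\frac{F'}{F}(\tfrac12+it)$, so that
\[
\frac{d}{dt}\frac{Z_F'(t)}{Z_F(t)}=-\Re\bigl(\log F\bigr)''\!\bigl(\tfrac12+it\bigr).
\]
By the Hadamard product for $\xi_F$ one has $(\log F)''(\tfrac12+it)=-\sum_{\rho}\bigl(\tfrac12+it-\rho\bigr)^{-2}+O(t^{-2})$, the $O$-term collecting the archimedean factors coming from $\gamma(s)$ and the polynomial prefactor. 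Under RH every zero is $\rho=\tfrac12+i\gamma$, whence $-\bigl(\tfrac12+it-\rho\bigr)^{-2}=(t-\gamma)^{-2}$ and $\Re(\log F)''(\tfrac12+it)=\sum_{\gamma}(t-\gamma)^{-2}+O(t^{-2})>0$ for large $t$. Therefore $\frac{d}{dt}(Z_F'/Z_F)<0$, i.e. $W_0(t)>0$.

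For general $k$ I would reproduce this argument after replacing $F$ by an entire function $\xi_{F,k}(s)$ attached to $F$, constructed in the spirit of \cite{M-T} and \cite{K}, whose zeros on the line $\sigma=\tfrac12$ coincide with the real zeros of $Z_F^{(k)}$, together with a factorisation $Z_F^{(k)}(t)=c_k(t)\,\xi_{F,k}(\tfrac12+it)$ in which $c_k$ is real, non-vanishing and slowly varying, $(\log c_k)''(t)=O(t^{-2})$. Granting the Selberg-class analogue of the Matsumoto--Tanigawa theorem --- that RH for $F$ sends all but finitely many zeros $\tfrac12+i\gamma_k$ of $\xi_{F,k}$ onto the critical line --- the computation of the previous paragraph gives
\[
\frac{d}{dt}\frac{Z_F^{(k+1)}(t)}{Z_F^{(k)}(t)}=(\log c_k)''(t)-\Re\bigl(\log\xi_{F,k}\bigr)''\!\bigl(\tfrac12+it\bigr)=-\sum_{\gamma_k}\frac{1}{(t-\gamma_k)^{2}}+O(t^{-2})<0,
\]
the finitely many exceptional zeros being absorbed into the error for $t\ge T(k)$. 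This is exactly $W_k(t)>0$.

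The main obstacle is concentrated in this last step: constructing $\xi_{F,k}$ for the Selberg class and controlling its analytic data (order, functional equation, and the archimedean contribution to $(\log\xi_{F,k})''$, into which the degree $d_F$ and the parameters $\lambda_j,\mu_j$ enter), and above all transferring RH from $F$ to $\xi_{F,k}$ up to finitely many zeros. Securing a non-vanishing, slowly varying factor $c_k$ --- so that passing from $\xi_{F,k}$ back to $Z_F^{(k)}$ does not disturb the monotonicity --- is the remaining delicate point.
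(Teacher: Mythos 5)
Your reduction is sound and, in substance, identical to the paper's final step: positivity of $W_k$ is exactly the statement that $Z_F^{(k+1)}/Z_F^{(k)}$ decreases between consecutive zeros of $Z_F^{(k)}$, which the paper derives from its Theorem \ref{th_EMF} (the formula $\frac{d}{dt}\frac{Z_F^{(k+1)}}{Z_F^{(k)}}(t)=-\sum_{\gamma_k}(t-\gamma_k)^{-2}+O_k(t^{-1})$) together with the bound $\sum_{0<\gamma_k<t}(t-\gamma_k)^{-2}\ge N_0(t;F_k)t^{-2}$ and the zero count. Your $k=0$ computation is correct. The genuine gap is that for $k\ge 1$ --- which is the whole content of the theorem --- you ``grant'' precisely the three results that constitute the body of the paper: (a) the existence of a completed entire function $\xi_{F,k}$ of order $1$, with a functional equation, satisfying $Z_F^{(k)}(t)=c_k(t)\,\xi_{F,k}(\tfrac12+it)$ for a controlled nonvanishing factor $c_k$; (b) the Riemann--von Mangoldt count for its zeros (the paper's Theorem \ref{th_countingzeros}); and (c) the transfer of RH from $F$ to $\xi_{F,k}$ up to finitely many zeros (the paper's Theorem \ref{th_RH}). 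None of these is citable for the Selberg class: Matsumoto--Tanigawa and Matsuoka/Kobayashi proved them only for $\zeta$, and Sections 2--7 of the paper are devoted exactly to establishing them in this generality, via the recursion $F_{k+1}=F_k'-\tfrac12\psi_F F_k$, the analysis of $\psi_F$ and of the poles of $f_k$ and $F_k$, an argument-principle count, and a Littlewood-lemma argument. Assuming them is assuming the theorem's substance, and you say as much yourself in your closing paragraph.

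There is also a quantitative slip that matters once one tries to close the gap: for $k\ge 1$ any such $\xi_{F,k}$ necessarily has ``trivial'' zeros far from the critical strip (in the paper they lie near the points $-(\mu_j+n)/\lambda_j$ and $1+(\mu_j+n)/\lambda_j$, $k$ of them per small circle, because the gamma factors must overcancel the poles of $F_k$), and their contribution to $\Re(\log\xi_{F,k})''(\tfrac12+it)$ is of genuine size $\asymp t^{-1}$ with uncontrolled sign, not $O(t^{-2})$ as you claim; this is why the paper's Theorem \ref{th_EMF} carries the error $O_k(t^{-1})$. Consequently the positivity of $W_k$ is not automatic: one must beat a possibly negative term of size $C/t$, which requires knowing that the critical-line zeros of $Z_F^{(k)}$ are sufficiently dense --- the paper uses $N_0(t;F_k)\gg t\log t$, i.e.\ Theorems \ref{th_countingzeros} and \ref{th_RH} --- so the very estimates you treat as an error term again rest on exactly the ingredients you left unproved.
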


We note that the Euler product condition is not required in the proof.
To prove this theorem, we introduce a meromorphic function $F_{k}(s)$ for each non-negative integer $k$, satisfying
\[
|Z_{F}^{(k)}(t)|=\left| F_{k}\left( \frac{1}{2}+it \right) \right|.
\]
The proof of Theorem~\ref{th_main} relies on the following three results.
Let $N(T;F_{k})$ be the number of zeros of $F_{k}(s)$ in the rectangle
\[
\mathscr{R}=\{ s=\sigma+it \mid 1-\sigma_{F,k}<\sigma<\sigma_{F,k}, \ 0<t<T \},
\]
where $\sigma_{F,k}$ is a sufficiently large positive number.
\begin{theorem}\label{th_countingzeros}
  For every non-negative integer $k$,
  \[
  N(T;F_{k})=\frac{d_{F}}{2\pi}T\log T+c_{F}T+O_{k}(\log T),
  \]
  where $c_{F}$ depends only on $F$.
\end{theorem}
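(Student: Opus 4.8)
The plan is to evaluate $N(T;F_k)$ by the argument principle applied to the boundary of $\mathscr R$, after first making the shape of $F_k(s)$ explicit. Writing $s=1/2+it$ and using $d/dt=i\,d/ds$, Leibniz' rule gives
\[
Z_F^{(k)}(t)=i^k H(s)^{-1/2}\sum_{j=0}^{k}\binom{k}{j}p_{k-j}(s)F^{(j)}(s),
\]
where $p_m(s)=H(s)^{1/2}(d/ds)^m H(s)^{-1/2}$ is a universal polynomial in the derivatives of $\log H$. Since $|H(1/2+it)^{-1/2}|=1$, the natural choice is
\[
F_k(s)=\sum_{j=0}^{k}\binom{k}{j}p_{k-j}(s)F^{(j)}(s),
\]
which indeed satisfies $|Z_F^{(k)}(t)|=|F_k(1/2+it)|$. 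The structural facts I would record first are: (i) from $(\log H)'(s)=-\tfrac{\gamma'}{\gamma}(s)-\tfrac{\gamma'}{\gamma}(1-s)$ and $\tfrac{\gamma'}{\gamma}(s)=\log Q+\sum_j\lambda_j\psi(\lambda_j s+\mu_j)$, Stirling gives $(\log H)'(s)\sim -d_F\log|s|$, so $p_m(s)\sim(\tfrac{d_F}{2}\log s)^m$ is a slowly varying log-power; (ii) $F_k(s)=p_k(s)F(s)\bigl(1+o(1)\bigr)$ as $\sigma\to+\infty$, so $F_k$ is bounded and nonvanishing on the right edge $\sigma=\sigma_{F,k}$; and (iii) the reality and evenness of $Z_F^{(k)}$ force a functional equation exchanging $F_k(s)$ and $F_k(1-s)$ up to an explicit gamma-type factor, which makes the left edge $\sigma=1-\sigma_{F,k}$ a reflection of the right edge.

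Having set this up, I would apply the argument principle in the form $N(T;F_k)=\tfrac{1}{2\pi}\Delta_{\partial\mathscr R}\arg F_k(s)+\Pi$, where $\Pi$ counts the poles of $F_k$ inside $\mathscr R$; these arise only from the pole of $F$ at $s=1$ (contributing poles of $F^{(j)}$ of order $m_F+j$) and from the finitely many real poles of the digamma factors inside $p_{k-j}$, so $\Pi=O_k(1)$. I would then fold the contour using the functional equation of (iii): the variations of $\arg F_k$ over the vertical and horizontal sides combine, and the dominant contribution is the variation of the argument of the gamma factor appearing in the functional equation, which by Stirling produces exactly $\tfrac{d_F}{2\pi}T\log T+c_F T+O(\log T)$, the same main term as in the classical Riemann--von Mangoldt formula for $F\in\mathcal S$. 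Crucially, the extra factor $p_k(s)\sim(\tfrac{d_F}{2}\log s)^k$ contributes $k\,\Delta\arg\log s=O_k(1)$ to the winding number, so it perturbs neither the $T\log T$ term nor the $T$ term; this is precisely why $c_F$ depends only on $F$ and the whole $k$-dependence is pushed into the error.

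The main obstacle is the remaining ``Dirichlet part'' $D_k(s):=F_k(s)/(\text{gamma factor})$, whose argument variation along the horizontal segments must be shown to be $O_k(\log T)$. For this I would use the Backlund trick: the number of sign changes of $\Re D_k(\sigma+iT)$ as $\sigma$ crosses the rectangle bounds $\Delta\arg D_k$, and this number is controlled, via Jensen's inequality on a disc of bounded radius, by $\log\max|D_k|$. The inputs needed are a polynomial-in-$t$ upper bound for each $F^{(j)}(s)$ (from axiom (S2), finite order, together with Cauchy's estimates) and for the digamma-polynomial coefficients $p_{k-j}(s)$ (from Stirling), together with the lower bound $|D_k|\gg 1$ at an anchor point on the right edge supplied by fact (ii). The delicate point is uniformity across the strip: one must verify that differentiation does not destroy the boundedness-and-nonvanishing needed to start the count, and that the coefficients $p_{k-j}(s)$ remain well controlled down to $\sigma=1-\sigma_{F,k}$, with all implied constants depending only on $F$ and $k$. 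Once these estimates are assembled, combining them with the gamma-factor main term and $\Pi=O_k(1)$ yields the stated asymptotic.
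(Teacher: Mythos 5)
Your proposal follows essentially the same route as the paper: your $F_k(s)=\sum_j\binom{k}{j}p_{k-j}(s)F^{(j)}(s)$ with $p_m=H^{1/2}(d/ds)^m H^{-1/2}$ is exactly the paper's $F_k$ (Propositions \ref{ZandF} and \ref{exF_{k}}, with $p_m=f_m$), and your argument-principle scheme --- folding the contour by the functional equation, extracting the main term from Stirling applied to the gamma/$\theta_F$ factor, noting the factor $p_k\sim(\tfrac{d_F}{2}\log s)^k$ only perturbs the winding by $O_k(1)$, and controlling the horizontal-segment argument variation by the Backlund--Jensen sign-change count with polynomial bounds on $F^{(j)}$ and a nonvanishing anchor on the right edge --- is precisely the paper's proof in Section 6. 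The only cosmetic differences are bookkeeping: you absorb the real-axis poles into an $O_k(1)$ correction $\Pi$ where the paper indents the contour, and you apply Jensen to the quotient $D_k$ as a whole where the paper treats the factors $-\psi_F/2$, $A_k$, $g_k$ separately.
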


Furthermore, $F_k(s)$ satisfies an analogue of RH:
\begin{theorem}\label{th_RH}
  Under the assumption of RH for $F(s)$,
  all non-real zeros of $F_k(s)$ in the region $\mathscr{R}$,
  with at most finitely many exceptions,
  lie on the critical line.
\end{theorem}

Theorems~\ref{th_countingzeros} and~\ref{th_RH} are analogues of Matsumoto and Tanigawa's theorems.
Theorem~\ref{th_RH} plays an important role in the proof of the following theorem.

\begin{theorem}\label{th_EMF}
  Under the assumption of RH for $F(s)$, we have
  \[
  \frac{d}{dt}\frac{Z_{F}^{(k+1)}}{Z_{F}^{(k)}}(t)=-\sum_{\gamma_{k}}\frac{1}{(t-\gamma_{k})^2}+O_{k}(t^{-1}),
  \]
  where $\gamma_{k}$ runs over the zeros of $Z_{F}^{(k)}(t)$, counted with multiplicity.
\end{theorem}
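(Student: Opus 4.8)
The plan is to transfer the problem from the real variable $t$ to the complex variable $w=\tfrac12+it$ and then to expand a logarithmic derivative of $F_k$ by its Hadamard product. First, fix $t$ not a zero of $Z_F^{(k)}$. Since $Z_F^{(k)}$ is real, $\frac{Z_F^{(k+1)}}{Z_F^{(k)}}(t)=\frac{d}{dt}\log|Z_F^{(k)}(t)|$, so that
\[
\frac{d}{dt}\frac{Z_F^{(k+1)}}{Z_F^{(k)}}(t)=\frac{d^2}{dt^2}\log|Z_F^{(k)}(t)|=\frac{d^2}{dt^2}\log\left|F_k\left(\tfrac12+it\right)\right|,
\]
the last equality being nothing but the defining property $|Z_F^{(k)}(t)|=|F_k(\tfrac12+it)|$. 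Writing $\log|F_k|=\Re\log F_k$ and differentiating twice along $w=\tfrac12+it$ (using $\frac{d}{dt}=i\frac{d}{dw}$ and $\Im(iz)=\Re z$ at each step) I would obtain the clean identity
\[
\frac{d}{dt}\frac{Z_F^{(k+1)}}{Z_F^{(k)}}(t)=-\Re\left[\frac{d}{dw}\frac{F_k'}{F_k}(w)\right]_{w=\frac12+it}.
\]
This reduction is elementary and uses only that $Z_F^{(k)}$ is real together with the stated absolute-value relation; it pinpoints the single analytic quantity to be estimated.

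Now $F_k$ is a meromorphic function of order one --- this follows from the growth bounds underlying Theorem \ref{th_countingzeros} --- so Hadamard's theorem yields the absolutely convergent partial-fraction expansion
\[
\frac{d}{dw}\frac{F_k'}{F_k}(w)=-\sum_{\rho}\frac{1}{(w-\rho)^2}+\sum_{p}\frac{n_p}{(w-p)^2},
\]
$\rho$ running over the zeros of $F_k$ (with multiplicity) and $p$ over its poles (of order $n_p$); absolute convergence holds because the exponent of convergence equals one, and no polynomial term survives the differentiation. Hence
\[
\frac{d}{dt}\frac{Z_F^{(k+1)}}{Z_F^{(k)}}(t)=\sum_{\rho}\Re\frac{1}{(w-\rho)^2}-\sum_{p}n_p\Re\frac{1}{(w-p)^2},\qquad w=\tfrac12+it.
\]
Under RH for $F(s)$, Theorem \ref{th_RH} guarantees that all but finitely many non-real zeros in $\mathscr{R}$ satisfy $\rho=\tfrac12+i\gamma_k$ with $\gamma_k\in\mathbb{R}$; for these $w-\rho=i(t-\gamma_k)$, so $\Re(w-\rho)^{-2}=-(t-\gamma_k)^{-2}$, and summing them produces precisely the main term $-\sum_{\gamma_k}(t-\gamma_k)^{-2}$.

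It then remains to show that the entire remainder is $O_k(t^{-1})$. The finitely many exceptional non-real zeros each contribute $O(t^{-2})$. The poles $p$ and the remaining real (``trivial'') zeros of $F_k$ all come from the gamma factor and the factor $H(s)=\omega\gamma(1-s)/\gamma(s)$ entering the construction of $F_k$, hence lie on the real axis; for real $p$ one has $|\tfrac12+it-p|^2=(\tfrac12-p)^2+t^2$, and since such points occur with gamma-type density the bound $\sum_{p}\bigl((\tfrac12-p)^2+t^2\bigr)^{-1}=O(t^{-1})$ follows by comparison with $\int(x^2+t^2)^{-1}\,dx=\pi/t$. Equivalently, this smooth contribution can be read off from derivatives of $\frac{\gamma'}{\gamma}$ on the line: with $\psi=\Gamma'/\Gamma$ the digamma function, $\frac{d}{ds}\frac{\gamma'}{\gamma}(\tfrac12+it)=\sum_{j}\lambda_j^2\psi'(\lambda_j(\tfrac12+it)+\mu_j)$ is $O_k(t^{-1})$ by the standard estimate $\psi'(z)\ll|z|^{-1}$.

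The main obstacle is this last step. One must locate the poles and trivial zeros of $F_k$ precisely --- which is where the explicit description of $F_k$ (built by applying $h+\frac{d}{ds}$ to $F$ a total of $k$ times, with $h=-\tfrac12 H'/H$) is indispensable --- and then prove that their total contribution, uniformly in $t$ and with an implied constant depending only on $k$, is $O_k(t^{-1})$. Along the way one must justify differentiating the Hadamard product termwise on $\sigma=\tfrac12$ and verify the absolute convergence of $\sum_{\gamma_k}(t-\gamma_k)^{-2}$; the latter is immediate from the local density $\asymp\log t$ of the ordinates $\gamma_k$ supplied by Theorem \ref{th_countingzeros}.
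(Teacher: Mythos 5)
Your approach is correct in outline and reaches the theorem by a genuinely different organization than the paper. You reduce the left-hand side to $-\Re\bigl[(F_k'/F_k)'(\tfrac12+it)\bigr]$ using the realness of $Z_F^{(k)}$ and then expand this directly as a partial-fraction sum over the zeros \emph{and poles} of the meromorphic function $F_k$. The paper instead completes $F_k$ to the \emph{entire} function $\xi_{F,k}(s)=s^{m_F}(s-1)^{m_F}Q^s\prod_{j=1}^{r}\Gamma(\lambda_js+\mu_j)^{-k+1}\Gamma(\lambda_j(1-s)+\mu_j)^{-k}F_k(s)$, applies the classical Hadamard factorisation there, and transfers back to $Z_F^{(k)}$ through the identity $\xi_{F,k}(\tfrac12+it)=g_{F,k}(t)Z_F^{(k)}(t)$, so the pole and gamma-factor contributions that you estimate as explicit sums over real points appear instead inside the smooth term $\frac{d}{dt}(g_{F,k}'/g_{F,k})(t)\ll t^{-1}$. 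After that, both proofs are identical in structure: Theorem \ref{th_RH} confines all but finitely many strip zeros to the critical line (giving the main term, since $\Re(w-\rho)^{-2}=-(t-\gamma_k)^{-2}$ for $\rho=\tfrac12+i\gamma_k$), the finitely many exceptions give $O_k(t^{-2})$, and the far-away zeros are handled by the same density comparison $\sum_n(t+n)^{-2}\ll t^{-1}$.

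Two points in your version need repair. First, the assertion that $F_k$ is meromorphic of order one ``from the growth bounds underlying Theorem \ref{th_countingzeros}'' is unsupported: that theorem is a zero count in a fixed vertical strip and supplies no growth estimate, and a Hadamard-type factorisation for a \emph{meromorphic} function is not free. The rigorous route is exactly the paper's Proposition \ref{factorisation}: clear the poles of $F_k$ (located by Lemma \ref{poles_F_k}) with reciprocal gamma factors and prove that the resulting entire function has order one via Stirling's formula together with the functional equation of Proposition \ref{FE}. Your plan therefore defers, rather than avoids, this construction. Second, the ``trivial'' zeros of $F_k$ outside the critical strip are not on the real axis: the (unnumbered) lemma of Section 5 places them only inside bounded circles centred at the real points $1+\frac{\mu_j+n}{\lambda_j}$ and $-\frac{\mu_j+n}{\lambda_j}$, with $k$ zeros per circle. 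This is still compatible with your comparison against $\int(x^2+t^2)^{-1}dx=\pi/t$, but the estimate must be phrased for near-real rather than real points. With these two repairs your argument is complete and equivalent in strength to the paper's.
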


Sections 2--5 are devoted to several auxiliary results needed to establish Theorems~\ref{th_countingzeros}--\ref{th_EMF}.
Theorems~\ref{th_countingzeros} and~\ref{th_RH} are proved in Sections 6 and 7, respectively.
In the final section, we prove Theorem~\ref{th_EMF} and then show that Theorems~\ref{th_countingzeros} and~\ref{th_EMF} together imply Theorem~\ref{th_main}.

\section{Definition and basic properties of $F_{k}(s)$}

The Selberg class $\mathcal{S}$, introduced by A.~Selberg \cite{S} in 1992, consists of Dirichlet series $F(s)$ satisfying the following five axioms:

\begin{enumerate}[label=(S\arabic*)]
	\item It can be expressed as a Dirichlet series
	\[
	F(s)=\sum_{n=1}^{\infty}\frac{a_{F}(n)}{n^s},
	\]
	which is absolutely convergent if $\sigma>1$ with $a_{F}(1)=1$.
	\item There exists an integer $m\ge 0$ such that $(s-1)^mF(s)$ extends to an entire function of finite order.
	The smallest $m$ is denoted by $m_{F}$.
	\item There exist an integer $r\ge 0$, real numbers $Q>0$, $\lambda_{j}>0$, complex numbers $\mu_{j}$ with $\Re \mu_{j}\ge 0$, and $\omega \in \mathbb{C}$ with $|\omega|=1$,
	such that the function $\xi_{F}(s)$ defined by
	\begin{align*}
		\xi_{F}(s)
		&=s^{m_{F}}(s-1)^{m_{F}}Q^{s}\prod_{j=1}^{r}\Gamma(\lambda_{j}s+\mu_{j})F(s) \\
		&=s^{m_{F}}(s-1)^{m_{F}}\gamma(s)F(s)
	\end{align*}
	satisfies the functional equation
	\[
	\xi_{F}(s)=\omega\overline{\xi_{F}(1-\overline{s})},
	\]
	where $\Gamma(s)$ is the gamma function.
	\item For every $\varepsilon>0$, $a_{F}(n)\ll_{\varepsilon}n^{\varepsilon}$.
	\item For every sufficiently large $\sigma$,
	\[
	\log F(s)=\sum_{n=1}^{\infty}\frac{b_{F}(n)}{n^{s}},
	\]
	where $b_{F}(n)=0$ unless $n=p^m$ with $m\ge 1$, and $b_{F}(n)\ll n^{\theta_{F}}$ for some $\theta_{F}<1/2$.
\end{enumerate}

We define the degree of $F$ by
\[
d_{F}=2\sum_{j=1}^{r}\lambda_{j}.
\]
This quantity depends only on $F$.
It is known that if $F \in \mathcal{S}$, then either $F=1$ or $d_{F}\ge 1$ (\cite{C-G}).
We refer to $\gamma(s)$ in (S3) as the gamma factor.
By (S3) and (S5), $F\in \mathcal{S}$ has no zeros outside the critical strip $0\le \sigma\le 1$ except for zeros in the left half-plane $\sigma\le 0$ given by the poles of the gamma factor.
The extended Selberg class $\mathcal{S}^{\sharp}$ consists of Dirichlet series satisfying axioms (S1)--(S3).

Let $H(s)=\omega\overline{\gamma(1-\overline{s})}/\gamma(s)$.
We see that
\[
F(s)=H(s)\overline{F(1-\overline{s})}, \ H(s)\overline{H(1-\overline{s})}=1.
\]
Since
\[
\left|H\left(\frac{1}{2}+it\right)\right|=1,
\]
there exists a continuous real-valued function $\theta_F(t)$ such that
\[
H\left(\frac12+it\right)=e^{-2i\theta_F(t)}.
\]
Since $H(s)$ is meromorphic and non-vanishing on the critical line,
$\theta_F(t)$ may be chosen to be smooth.
Then we define Hardy's $Z$-function for $F(s)$ as
\[
Z_{F}(t):=e^{i\theta_F(t)}F\left( \frac{1}{2}+it \right) \quad (t\in \mathbb{R}).
\]
It follows that $|Z_{F}(t)|=|F(1/2+it)|$ and that $Z_{F}(t)$ is real-valued.

\begin{rem}
	If the coefficients $a_{F}(n)$ are real, then $\overline{F(1-\overline{s})}=F(1-s)$, so that
	\[
	F(s)=H(s)F(1-s), \ H(s)H(1-s)=1.
	\]
	Moreover, if $\omega$ and the $\mu_{j}$ are real, i.e., $\overline{H(s)}=H(\overline{s})$,
	then $Z_{F}(t)$ is an even function (see also \cite[p. 51]{I}).
\end{rem}

Let $\psi_{F}(s)=(H'/H)(s)$.
It then follows that
\begin{equation}\label{psitheta}
\psi_{F} \left(\frac{1}{2}+it \right)=-2\theta_{F}'(t).
\end{equation}
Setting $F_0(s)=F(s)$, we define $F_{k}(s)$ for $k\geq 0$ inductively by
\begin{equation}\label{def_F_k}
F_{k+1}(s)=F_{k}'(s)-\frac{1}{2}\psi_{F}(s)F_{k}(s) \quad (k\geq 0).
\end{equation}

\begin{rem}
	The functions $F_k(s)$ may be regarded as analogues, for
	$F\in\mathcal S^\sharp$, of the functions $f_k(s)$ introduced by
	Matsuoka \cite{M} (denoted by $Z_k(s)$ in \cite{K}).
	
	They are also closely related to the functions $\eta_k(s)$ of
	Matsumoto and Tanigawa \cite{M-T}, which were introduced in the study
	of derivatives of Hardy's $Z$-function for the Riemann zeta-function.
\end{rem}

\begin{prop}\label{ZandF}
For every non-negative $k$, we have
\begin{equation*}
Z_{F}^{(k)}(t)=i^{k}F_{k}\left(\frac{1}{2}+it \right)e^{i\theta_{F}(t)}.
\end{equation*}
\end{prop}

\begin{proof}
The case $k=0$ follows immediately from the definition of $Z_{F}(t)$.
If we assume that the equation is true for $k$, then 
\begin{equation*}
Z_{F}^{(k+1)}(t)=i^{k+1}e^{i\theta_{F}(t)}\left(F_{k}'\left(\frac{1}{2}+it\right)+\theta_{F}'(t)F_{k}\left(\frac{1}{2}+it \right)\right).
\end{equation*}
By \eqref{psitheta} and \eqref{def_F_k}, we conclude that the identity holds for $k+1$.
\end{proof}

\begin{prop}[The Functional Equation]\label{FE}
For every non-negative $k$, we have
\begin{equation}\label{eqFE}
F_{k}(s)=(-1)^k H(s)\overline{F_{k}(1-\overline{s})}.
\end{equation}
\end{prop}

\begin{proof}
The case $k=0$ follows directly from axiom (S3).
If we assume that the equation is true for $k$, then by the definition of $F_{k}(s)$,
\begin{align*}
F_{k+1}(s) &= F_{k}'(s)-\frac{1}{2}\psi_{F}(s)F_{k}(s) \\
&= (-1)^{k} \left( H'(s)\overline{F_{k}(1-\overline{s})}-H(s)\overline{F_{k}'(1-\overline{s})}-\frac{1}{2}\psi_{F}(s)H(s)\overline{F_{k}(1-\overline{s})} \right) \\
&= (-1)^{k+1}H(s)\left( \overline{F_{k}'(1-\overline{s})}+\frac{1}{2}\psi_{F}(s)\overline{F_{k}(1-\overline{s})}-\psi_{F}(s)\overline{F_{k}(1-\overline{s})} \right) \\
&= (-1)^{k+1}H(s)\left( \overline{F_{k}'(1-\overline{s})}-\frac{1}{2}\psi_{F}(s)\overline{F_{k}(1-\overline{s})} \right) \\
&= (-1)^{k+1}H(s)\overline{F_{k+1}(1-\overline{s})}.
\end{align*}
This completes the proof.
\end{proof}

For later use, we derive a more explicit expression for $F_{k}(s)$ for our purposes.
Let $f_0(s)=1$, and define $f_k(s)$ for $k\geq 0$ inductively by
\begin{equation*}
f_{k+1}(s)=f_k'(s)-\frac{1}{2}\psi_{F}(s)f_k(s) \quad (k\geq 0).
\end{equation*}
Then we have the following proposition.
\begin{prop}\label{exF_{k}}
For every non-negative $k$, we have
\begin{equation*}
F_{k}(s)=\sum_{j=0}^{k}\binom{k}{j}f_{k-j}(s)F^{(j)}(s).
\end{equation*}
\end{prop}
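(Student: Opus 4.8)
The plan is to prove the identity by induction on $k$, exploiting the fact that both $F_{k}$ and $f_{k}$ are generated from $F$ and from $1$, respectively, by repeated application of the \emph{same} first-order operator $D g := g' - \tfrac12 \psi_{F} g$. The base case $k=0$ is immediate: the right-hand side reduces to $\binom{0}{0} f_0(s) F^{(0)}(s) = F(s) = F_0(s)$.

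For the inductive step I would assume the formula for a given $k$ and apply $D$ to both sides, using $F_{k+1} = D F_{k}$. Since $D$ is linear, it suffices to evaluate it on each summand $f_{k-j} F^{(j)}$. The key computation is the product-rule identity
\[
D\bigl(f_{k-j} F^{(j)}\bigr) = \bigl(f_{k-j}' - \tfrac12 \psi_{F} f_{k-j}\bigr) F^{(j)} + f_{k-j} F^{(j+1)} = f_{k-j+1}(s) F^{(j)}(s) + f_{k-j}(s) F^{(j+1)}(s),
\]
where the final equality invokes the defining recurrence for the $f$-sequence. Applying $D$ termwise therefore produces two sums, one transferring the differentiation onto the $f$-factor and one onto the $F$-factor, namely
\[
F_{k+1}(s) = \sum_{j=0}^{k}\binom{k}{j} f_{k+1-j}(s) F^{(j)}(s) + \sum_{j=0}^{k}\binom{k}{j} f_{k-j}(s) F^{(j+1)}(s).
\]

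It then remains to reorganise this into the asserted shape. I would shift the index $j \mapsto j-1$ in the second sum, so that both sums carry a common factor $f_{k+1-j}(s) F^{(j)}(s)$, and then combine the coefficients range by range: the boundary terms $j=0$ and $j=k+1$ contribute the coefficients $\binom{k}{0}=\binom{k+1}{0}=1$ and $\binom{k}{k}=\binom{k+1}{k+1}=1$, while for $1\le j\le k$ Pascal's rule $\binom{k}{j}+\binom{k}{j-1}=\binom{k+1}{j}$ collapses the two contributions into a single binomial, giving exactly $\sum_{j=0}^{k+1}\binom{k+1}{j} f_{k+1-j}(s) F^{(j)}(s)$ and closing the induction. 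There is no analytic content here; the only points requiring care are the recognition that the single operator $D$ drives both recurrences simultaneously (which is precisely what makes the $f_{k-j+1}$ term appear) and the index bookkeeping in the final reindexing, so I expect the combinatorial reindexing step to be the only place where an error could creep in.
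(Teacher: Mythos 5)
Your proof is correct and follows essentially the same route as the paper: induction on $k$, using the defining recurrence to absorb the $-\tfrac12\psi_F$ term into $f_{k-j+1}$, then reindexing and applying Pascal's rule. Your packaging of the computation via the operator $D g = g' - \tfrac12\psi_F g$ and its twisted Leibniz rule is just a cleaner way of writing the identical calculation the paper performs in-line.
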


\begin{proof}
The case $k=0$ is immediate.
We assume that the identity holds for some non-negative integer $k$.
By the definition,
\begin{align*}
F_{k+1}(s)&=F_{k}'(s)-\frac{1}{2}\psi_{F}(s)F_{k}(s)\\
&=\sum_{j=0}^{k}\binom{k}{j}f_{k-j}'(s)F^{(j)}(s)+\sum_{j=0}^{k}\binom{k}{j}f_{k-j}(s)F^{(j+1)}(s) \\
&\quad-\frac{1}{2}\psi_{F}(s)\sum_{j=0}^{k}\binom{k}{j}f_{k-j}(s)F^{(j)}(s)\\
&=\sum_{j=0}^{k}\binom{k}{j}f_{k+1-j}(s)F^{(j)}(s)+\sum_{j=0}^{k}\binom{k}{j}f_{k-j}(s)F^{(j+1)}(s)\\
&=f_{k+1}(s)F(s)+\sum_{j=1}^{k}\left\{\binom{k}{j}+\binom{k}{j-1}\right\}f_{k+1-j}(s)F^{(j)}(s)+F^{(k+1)}(s)\\
&=f_{k+1}(s)F(s)+\sum_{j=1}^{k}\binom{k+1}{j}f_{k+1-j}(s)F^{(j)}(s)+F^{(k+1)}(s) \\
&=\sum_{j=0}^{k+1}\binom{k+1}{j}f_{k+1-j}(s)F^{(j)}(s).
\end{align*}
Here, to obtain the last equality, we use the relation
\begin{equation*}
\binom{k}{j}+\binom{k}{j-1}=\binom{k+1}{j}.
\end{equation*}
\end{proof}

\section{Some estimates on $\psi_{F}(s)$}
In this section, we establish some estimates for $\psi_{F}(s)$.
From the definition of $H(s)$, we have

\begin{equation*}
\psi_{F}(s)=-2\log Q -\sum_{j=1}^{r}\lambda_{j}\left( \frac{\Gamma'}{\Gamma}(\lambda_{j}(1-s)+\overline{\mu_{j}})+\frac{\Gamma'}{\Gamma}(\lambda_{j}s+\mu_{j}) \right).
\end{equation*}
When $d_{F}=0$, clearly
\begin{equation*}
	\psi_{F}(s)=-2\log Q \ \text{and}\ \psi_{F}^{(k)}(s)=0 \quad (k\ge 1).
\end{equation*}
Let $d_{F}>0$.
By Euler's reflection formula, for $1\le j\le r$, we obtain

\[
\frac{\Gamma'}{\Gamma}(\lambda_{j}(1-s)+\overline{\mu_{j}})=\frac{\Gamma'}{\Gamma}(\lambda_{j}s+1-\lambda_{j}-\overline{\mu_{j}})-\pi\cot \pi(\lambda_{j}s+1-\lambda_{j}-\overline{\mu_{j}}).
\]
Let $\mathscr{D}$ denote the subset of $\mathbb{C}$ obtained by removing 
open discs with radii depending on $k$ centred at 
$s=1+\frac{\overline{\mu_j}+n}{\lambda_j}$ and $s=-\frac{\mu_j+n}{\lambda_j}$ 
for $1\leq j\leq r$ and $n\in\mathbb{Z}_{\geq 0}$.
We write $\mathscr{D}_1 = \mathbb{C} \setminus \mathscr{D}$ for the complement of $\mathscr{D}$.
By Stirling's formula, for $\sigma>\frac{1}{4}$ and $1\le j\le r$, we obtain
\[
\frac{\Gamma'}{\Gamma}(\lambda_{j}s+\mu_{j})=\log (\lambda_{j}s+\mu_{j}) -\frac{1}{2(\lambda_{j}s+\mu_{j})}+O\left(\frac{1}{|s|^2}\right)
\]
and
\[ \frac{d^k}{ds^k}\frac{\Gamma'}{\Gamma}(\lambda_{j}s+\mu_{j})=(-1)^{k-1}(k-1)!\left(\frac{\lambda_{j}}{\lambda_{j}s+\mu_{j}}\right)^{k}+O_{k}(|s|^{-k-1}).
\]
In the same manner, for $1\le j\le r$ and either $\sigma>1+\frac{\Re\mu_{j}-1}{\lambda_{j}}$ or $|t+\frac{\Im \mu_{j}}{\lambda_{j}}|\ge 1$, we have 
\[
\frac{\Gamma'}{\Gamma}(\lambda_{j}s+1-\lambda_{j}-\overline{\mu_{j}})=\log (\lambda_{j}s+1-\lambda_{j}-\overline{\mu_{j}}) -\frac{1}{2(\lambda_{j}s+1-\lambda_{j}-\overline{\mu_{j}})}+O\left(\frac{1}{|s|^2}\right)
\]
and
\[ \frac{d^k}{ds^k}\frac{\Gamma'}{\Gamma}(\lambda_{j}s+1-\lambda_{j}-\overline{\mu_{j}})=(-1)^{k-1}(k-1)!\left(\frac{\lambda_{j}}{\lambda_{j}s+1-\lambda_{j}-\overline{\mu_{j}}}\right)^{k}+O_{k}(|s|^{-k-1}).
\]
Hence we find that there is an absolute positive constant $\sigma_1$ such that
\begin{equation*}
	\Re\frac{\Gamma'}{\Gamma}(\lambda_{j}s+\mu_{j}) \ge \frac{1}{2}\log \sigma \ \text{and} \ \Re\frac{\Gamma'}{\Gamma}(\lambda_{j}s+1-\lambda_{j}-\overline{\mu_{j}}) \ge \frac{1}{2}\log \sigma
\end{equation*}
for $\sigma\ge \sigma_1$.
In the region $\mathscr{D}$, we have for $1\le j\le r$,
\begin{equation*}
\cot \pi(\lambda_{j}s+1-\lambda_{j}-\overline{\mu_{j}})=\begin{cases}
           -i+O(e^{-2\pi \lambda_{j}t}) & \text{$(t\ge 0)$}, \\
           i+O(e^{2\pi \lambda_{j}t}) & \text{$(t<0)$},
           \end{cases}
\end{equation*}
and
\begin{equation*}
\frac{d^k}{ds^k}\cot \pi(\lambda_{j}s+1-\lambda_{j}-\overline{\mu_{j}})=O_k(e^{-2\pi \lambda_{j}|t|}) \quad(k\geq 1).
\end{equation*}

From the above argument, we have
\begin{equation}\label{realpsi}
\begin{split}
	\Re \psi_{F}(s) &\leq -2\log Q +O(1)-\frac{d_{F}}{2}\log \sigma \\
	&\leq -\frac{d_{F}}{4}\log \sigma-2\log Q
\end{split}
\end{equation}
for $s\in \mathscr{D}(\sigma_1) := \{s \in \mathscr{D} \mid \sigma \geq \sigma_1 \}$,
provided that $\sigma_1$ is sufficiently large.
We close this section with the following key lemma.

\begin{lemma}\label{estpsi}
Let $s \in \mathscr{D}$ and $d_{F}>0$.
For sufficiently large $|s|$ with $\sigma>1/2$, we have
\begin{equation*}
\psi_{F}(s)=-\log \lambda Q^{2}s+\frac{d_{F}-4\Theta}{2s}+Ci+O\left(\frac{1}{|s|^2}+e^{-C'|t|}\right),
\end{equation*}
and
\begin{equation*}
\psi_{F}^{(k)}(s)=O_k\left(|s|^{-k}+e^{-C'|t|}\right) \quad(k\geq 1),
\end{equation*}
where $\lambda=\prod_{j=1}^{r}\lambda_{j}^{2\lambda_{j}}$, $\Theta=\sum_{j=1}^{r}\Im \mu_{j}$, $C$ is a real constant depending only on $F$ and the sign of $t$, and $C'$ is a positive constant depending only on $F$.
\end{lemma}
In this lemma, $s$ is limited in the right half plan.
However, for $s$ in the left half plane, we may apply this lemma to $\overline{\psi_{F}(1-\overline{s})}=\psi_{F}(s)$.

\section{Poles of $f_k(s)$ and $F_{k}(s)$}
In this section, we investigate the poles of $f_{k}(s)$ and $F_{k}(s)$.
We begin with the following lemma.

\begin{lemma}
   Let $1\le j\le r$, $n\in \mathbb{Z}_{\ge 0}$ and $d_{F}>0$.
The poles of $\psi_{F}(s)$ are all simple, and located at $s=1+\frac{\overline{\mu_{j}}+n}{\lambda_{j}}$ with residue $-1$ and at $s=-\frac{\mu_{j}+n}{\lambda_{j}}$ with residue $1$.
\end{lemma}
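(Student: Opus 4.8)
The plan is to read the poles straight off the explicit expression
\[
\psi_{F}(s)=-2\log Q -\sum_{j=1}^{r}\lambda_{j}\left( \frac{\Gamma'}{\Gamma}(\lambda_{j}(1-s)+\mu_{j})+\frac{\Gamma'}{\Gamma}(\lambda_{j}s+\mu_{j}) \right)
\]
recorded at the start of Section 3. The constant $-2\log Q$ is holomorphic, so every singularity must come from one of the digamma terms. The single external fact I would invoke is the classical one that $\frac{\Gamma'}{\Gamma}(z)$ is meromorphic with simple poles exactly at the non-positive integers $z=-n$ ($n\in\mathbb{Z}_{\ge 0}$), each of residue $-1$, and no other singularities.

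First I would treat the term $\frac{\Gamma'}{\Gamma}(\lambda_{j}s+\mu_{j})$. A pole occurs precisely when $\lambda_{j}s+\mu_{j}=-n$, that is at $s_0=-\frac{\mu_{j}+n}{\lambda_{j}}$. Setting $z=\lambda_{j}s+\mu_{j}$ and using $\frac{\Gamma'}{\Gamma}(z)\sim -1/(z+n)$ near $z=-n$ gives $z+n=\lambda_{j}(s-s_0)$, hence $\frac{\Gamma'}{\Gamma}(\lambda_{j}s+\mu_{j})\sim -1/(\lambda_{j}(s-s_0))$, so this term has residue $-1/\lambda_{j}$ there. Multiplying by the prefactor $-\lambda_{j}$ in the formula yields residue $+1$, matching the claim at $s=-\frac{\mu_{j}+n}{\lambda_{j}}$.

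Next the reflected term $\frac{\Gamma'}{\Gamma}(\lambda_{j}(1-s)+\mu_{j})$ is handled the same way, the only change being the sign from $\frac{d}{ds}(1-s)=-1$. Its poles sit at $\lambda_{j}(1-s)+\mu_{j}=-n$, i.e. $s_0=1+\frac{\mu_{j}+n}{\lambda_{j}}$, and now $z+n=-\lambda_{j}(s-s_0)$, so the local expansion reads $\sim 1/(\lambda_{j}(s-s_0))$; after multiplying by $-\lambda_{j}$ the residue is $-1$, as stated. In both cases the poles are simple because each digamma pole is simple and the substitution $z=\lambda_{j}s+\mu_{j}$ (respectively $z=\lambda_{j}(1-s)+\mu_{j}$) is affine, hence preserves the pole order.

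Finally I would record that no interference between the two families can spoil simplicity: since $\Re\mu_{j}\ge 0$ and $\lambda_{j}>0$, the points $-\frac{\mu_{j}+n}{\lambda_{j}}$ all satisfy $\Re\le 0$ while the points $1+\frac{\mu_{j}+n}{\lambda_{j}}$ all satisfy $\Re\ge 1$, so the two families lie in disjoint half-planes and cannot merge, and any accidental coincidence among parameters within a single family merely adds residues of equal sign while leaving the pole order equal to one. I do not expect a genuine obstacle here; the only place demanding care is the residue bookkeeping, namely the cancellation of the factor $\lambda_{j}$ against $1/\lambda_{j}$ together with the extra minus sign produced by the reflection $s\mapsto 1-s$, which is exactly what distinguishes the residue $+1$ at the left poles from the residue $-1$ at the right poles.
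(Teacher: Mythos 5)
Your proposal is correct, but it follows a different route from the paper. The paper's proof never touches the digamma expansion: it works directly with $H(s)=\omega\gamma(1-s)/\gamma(s)=\omega Q^{1-2s}\prod_{j=1}^{r}\Gamma(\lambda_{j}(1-s)+\mu_{j})/\Gamma(\lambda_{j}s+\mu_{j})$, reads off that the zeros of $H$ are at $s=-\frac{\mu_{j}+n}{\lambda_{j}}$ (poles of the denominator gammas) and the poles of $H$ at $s=1+\frac{\mu_{j}+n}{\lambda_{j}}$ (poles of the numerator gammas), all simple, and then invokes the standard fact that $H'/H$ has a simple pole of residue $+m$ at a zero of order $m$ and $-m$ at a pole of order $m$. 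That argument buys you something your route has to earn by hand: residues of a logarithmic derivative are automatically the integer orders, invariant under the affine substitutions $z=\lambda_{j}s+\mu_{j}$ and $z=\lambda_{j}(1-s)+\mu_{j}$, so the $\lambda_{j}$ versus $1/\lambda_{j}$ cancellation and the sign flip from $s\mapsto 1-s$ — the two places you rightly identify as needing care — never appear. Your route, in exchange, uses only the explicit formula for $\psi_{F}$ displayed at the start of Section 3 (which is legitimately available, since it precedes the lemma) together with the classical pole structure of $\Gamma'/\Gamma$, and your bookkeeping is carried out correctly. One small remark: your final observation about coincidences within a family is slightly understated — if two pairs $(j,n)$ produce the same point, the residues add, so the residue would be $\pm 2$ rather than $\pm 1$, which strictly exceeds what the lemma asserts; but the paper's own proof glosses over exactly the same degeneracy when it declares all zeros and poles of $H$ simple, so this is a shared imprecision rather than a defect of your argument.
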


\begin{proof}
Since $H(s)=\omega\overline{\gamma(1-\overline{s})}/\gamma(s)=\omega Q^{1-2s}\prod_{j=1}^{r}\Gamma(\lambda_{j}(1-s)+\overline{\mu_{j}})/\Gamma(\lambda_{j}s+\mu_{j})$,
the zeros of $H(s)$ are located at $s=-\frac{\mu_{j}+n}{\lambda_{j}}$ and the poles at $s=1+\frac{\overline{\mu_{j}}+n}{\lambda_{j}}$, all of which are simple.
The lemma follows.
\end{proof}

\begin{lemma}\label{poles_f_k}
  Let $1\le j\le r$, $n\in \mathbb{Z}_{\ge 0}$ and $d_{F}>0$.
For $k\geq 0$, the function $f_{k}(s)$ has poles of order $k$ which are located only at $s=-\frac{\mu_{j}+n}{\lambda_{j}}, 1+\frac{\overline{\mu_{j}}+n}{\lambda_{j}}$.
\end{lemma}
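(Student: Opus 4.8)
The plan is to proceed by induction on $k$, exploiting the recursion $f_{k+1}(s)=f_{k}'(s)-\frac{1}{2}\psi_{F}(s)f_{k}(s)$ together with the previous lemma, which tells us that $\psi_{F}$ has only simple poles, located at the points $a=1+\frac{\mu_{j}+n}{\lambda_{j}}$ (residue $-1$) and $a=-\frac{\mu_{j}+n}{\lambda_{j}}$ (residue $+1$), and is holomorphic elsewhere. Write $\mathscr{P}$ for this set of points. The base case $k=0$ is immediate since $f_{0}\equiv 1$ is entire, and the case $k=1$ follows because $f_{1}=-\frac{1}{2}\psi_{F}$, which has simple poles exactly at the points of $\mathscr{P}$.

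For the inductive step I would assume that $f_{k}$ is holomorphic on $\mathbb{C}\setminus\mathscr{P}$ and has a pole of order exactly $k$ at each $a\in\mathscr{P}$ (so $k\ge 1$). Since differentiation raises the order of a genuine pole by one, $f_{k}'$ has poles of order $k+1$ on $\mathscr{P}$; and since $\psi_{F}$ contributes one extra simple pole, the product $\psi_{F}f_{k}$ likewise has poles of order at most $k+1$ on $\mathscr{P}$. Both terms are holomorphic off $\mathscr{P}$, so $f_{k+1}$ is holomorphic on $\mathbb{C}\setminus\mathscr{P}$ with poles of order at most $k+1$ at the points of $\mathscr{P}$; this already pins down the location.

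The delicate part --- and the step I expect to be the real obstacle --- is to rule out any cancellation that would lower the order below $k+1$. Fixing $a\in\mathscr{P}$ with residue $\rho\in\{\pm 1\}$, I would write $\psi_{F}(s)=\rho/(s-a)+O(1)$ and $f_{k}(s)=c_{k}/(s-a)^{k}+\cdots$ near $a$, with $c_{k}\neq 0$ by the inductive hypothesis. Then the $(s-a)^{-(k+1)}$-coefficients of $f_{k}'$ and of $-\frac{1}{2}\psi_{F}f_{k}$ are $-kc_{k}$ and $-\frac{\rho}{2}c_{k}$ respectively, so the leading Laurent coefficient of $f_{k+1}$ at $a$ is $c_{k+1}=-\bigl(k+\tfrac{\rho}{2}\bigr)c_{k}$. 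Because $\rho=\pm 1$, the factor $k+\tfrac{\rho}{2}$ is a nonzero half-integer for every $k\ge 0$, whence $c_{k+1}\neq 0$ and the pole has order exactly $k+1$. Thus the whole argument hinges on the residues of $\psi_{F}$ being precisely $\pm 1$ rather than some value that could conspire with $-k$ to vanish, and this is exactly what the preceding lemma supplies.
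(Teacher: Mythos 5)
Your proposal is correct and follows essentially the same route as the paper: induction on $k$ via the Laurent expansion at each pole, with the key point being that the leading coefficient $-\bigl(k+\tfrac{\rho}{2}\bigr)c_{k}$ cannot vanish because the residues $\rho$ of $\psi_{F}$ are $\pm 1$. If anything, you are slightly more careful than the paper, which writes only the single sign case $-kc_{k}+\tfrac{c_{k}}{2}$, whereas you track both residue signs explicitly.
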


\begin{proof}
  The case $k=1$ follows immediately from the previous lemma.
  Suppose the lemma holds for some intger $k \ge 1$.
  Let $a$ be a pole of $f_{k}(s)$.
  Then by Laurent expansion at centre $a$, we have
\begin{equation*}
f_{k}(s)=\frac{c_k}{(s-a)^k}+\cdots,
\end{equation*}
where $c_k$ does not vanish.
By the definition and the previous lemma, we have
\begin{equation*}
f_{k+1}(s)=\frac{-kc_k+\frac{c_k}{2}}{(s-a)^{k+1}}+\cdots.
\end{equation*}
Since $-kc_k+c_k/2\neq 0$, the pole at $s=a$ has order $k+1$, which proves the inductive step.
\end{proof}

Combining this lemma with Proposition~\ref{exF_{k}}, we immediately obtain the following.

\begin{lemma}\label{poles_F_k}
  Let $1\le j\le r$, $n\in \mathbb{Z}_{\ge 0}$ and $d_{F}>0$.
For $k\geq 0$, the function $F_{k}(s)$ has poles of order $k$ located at $s=1+\frac{\overline{\mu_{j}}+n}{\lambda_{j}}$ and those of order $k-1$ located at $s=-\frac{\mu_{j}+n}{\lambda_{j}}$.
Moreover, if $F_{0}(s)=F(s)$ has a pole of order $m_{F}$ located at $s=1$ then $F_{k}(s)$ also has a pole at $s=1$ and the order is $m_{F}$.
\end{lemma}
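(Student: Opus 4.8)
The plan is to locate the poles of $F_{k}$ from the explicit expansion of Proposition~\ref{exF_{k}}, namely $F_{k}(s)=\sum_{i=0}^{k}\binom{k}{i}f_{k-i}(s)F^{(i)}(s)$, and then to compute the order at each. By axiom (S2) the only pole of $F$, and hence of every derivative $F^{(i)}$, is at $s=1$, of order $m_{F}+i$; by Lemma~\ref{poles_f_k} the only poles of $f_{k-i}$ are at the real points $s=1+\frac{\mu_{j}+n}{\lambda_{j}}$ and $s=-\frac{\mu_{j}+n}{\lambda_{j}}$ ($1\le j\le r$, $n\ge 0$), each of order $k-i$. A finite sum is singular only where a summand is, so the poles of $F_{k}$ can occur only at these points and at $s=1$, and it remains to determine the order at each.

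At a point $a=1+\frac{\mu_{j}+n}{\lambda_{j}}$ with $a\neq 1$ (so $a>1$), every $F^{(i)}$ is holomorphic, and among the summands only the factors $f_{k-i}$ are singular. The term $i=0$, that is $f_{k}(s)F(s)$, has a pole of order $k$, while every term with $i\ge 1$ has a pole of order at most $k-1$. Since $F$ has no zeros for $\sigma>1$, the leading Laurent coefficient of the $i=0$ term (the nonzero leading coefficient of $f_{k}$ times $F(a)\neq 0$) is not cancelled, and $F_{k}$ has a pole of exact order $k$ at $a$.

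For the mirror points $a=-\frac{\mu_{j}+n}{\lambda_{j}}$ the direct approach is awkward, since $F$ now has a trivial zero at $a$ and both the $i=0$ and $i=1$ terms reach order $k-1$, so one would have to check that their leading coefficients do not cancel. I would instead use the functional equation of Proposition~\ref{FE} in the form $F_{k}(s)=(-1)^{k}H(s)F_{k}(1-s)$. The reflected point $1-a=1+\frac{\mu_{j}+n}{\lambda_{j}}$ has already been treated, so $F_{k}(1-s)$ has a pole of order $k$ at $s=a$, while $H$ has a \emph{simple} zero there (the zeros of $H$, equivalently the poles of $\psi_{F}$, are simple). Multiplying a simple zero against an order-$k$ pole leaves a pole of order $k-1$, with no room for cancellation; this is the cleanest route, and it bypasses the only genuine difficulty.

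Finally, at $s=1$ I would argue by induction on $k$ from the recursion \eqref{def_F_k}, $F_{k+1}=F_{k}'-\tfrac12\psi_{F}F_{k}$, tracking only the top Laurent coefficient. If $F_{k}$ has a pole of order $p$ at $s=1$ with leading coefficient $c$, and $\psi_{F}$ has residue $\rho$ there ($\rho=-1$ if some $\mu_{j}=0$, and $\rho=0$ otherwise), then $F_{k}'$ and $\tfrac12\psi_{F}F_{k}$ each contribute to order $p+1$, with combined leading coefficient $-c\bigl(p+\tfrac{\rho}{2}\bigr)$; this never vanishes, since $p$ is a positive integer and $\tfrac{\rho}{2}\in\{0,-\tfrac12\}$. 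Starting from $p=m_{F}$ for $F_{0}=F$, the order rises by exactly one at each step, giving order $k+m_{F}$ at $s=1$. (The same computation, with $\rho=\pm1$, where $p+\tfrac{\rho}{2}$ is never zero for any integer $p$, reproves the orders $k$ and $k-1$ at the two families $1+\frac{\mu_{j}+n}{\lambda_{j}}$ and $-\frac{\mu_{j}+n}{\lambda_{j}}$ and serves as a consistency check.) The recurring obstacle is exactly this non-cancellation of leading coefficients: it is automatic at $1+\frac{\mu_{j}+n}{\lambda_{j}}$ (a single dominant term), it is handled at $s=1$ by the identity $p+\tfrac{\rho}{2}\neq 0$, and it is avoided at the mirror points by the functional equation. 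The only points needing extra bookkeeping are the coincidences, possible only when some $\mu_{j}=0$, of $s=0$ or $s=1$ with a pole of $\psi_{F}$ (for instance $s=0$ is not a trivial zero of $\zeta$); these perturb the orders of finitely many poles near the real axis and do not affect the asymptotic counting, whose error term is $O_{k}(\log T)$.
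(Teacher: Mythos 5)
Your proof is correct, and it is substantially more complete than the paper's, which disposes of this lemma in a single sentence: ``This lemma [Lemma~\ref{poles_f_k}] and Proposition~\ref{exF_{k}} immediately lead to the following lemma.'' The paper's reading of the expansion is genuinely immediate only at the points $s=1+\frac{\mu_{j}+n}{\lambda_{j}}$ away from $s=1$, where, exactly as you argue, the single term $f_{k}F$ dominates and $F$ does not vanish. At the mirror points $s=-\frac{\mu_{j}+n}{\lambda_{j}}$ the $i=0$ and $i=1$ terms both reach order $k-1$, and at $s=1$ with some $\mu_{j}=0$ every term $f_{k-i}F^{(i)}$ has the same order $k+m_{F}$, so the ``immediate'' argument tacitly needs a non-cancellation of leading Laurent coefficients that the paper never checks --- precisely the difficulty you isolate. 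Your two devices close these gaps cleanly: the functional equation $F_{k}(s)=(-1)^{k}H(s)F_{k}(1-s)$ of Proposition~\ref{FE} at the mirror points, where the order of a simple zero and an order-$k$ pole simply add, and the induction on the recursion (\ref{def_F_k}) at $s=1$, where a pole of order $p$ with leading coefficient $c$ produces one of order $p+1$ with leading coefficient $-c\left(p+\tfrac{\rho}{2}\right)\neq 0$. (Note that both you and the paper inherit the implicit assumption, built into the lemma on the poles of $\psi_{F}$, that the points $1+\frac{\mu_{j}+n}{\lambda_{j}}$ and $-\frac{\mu_{j}+n}{\lambda_{j}}$ are pairwise distinct, so that the relevant zeros and poles of $H$ are simple; for gamma factors with repeated factors, such as that of $\zeta^{2}$, the residues of $\psi_{F}$ need not be $\pm 1$ and the bookkeeping changes.) Your closing remark about the coincidences at $s=0$ and $s=1$ when some $\mu_{j}=0$ is also well taken: for $F=\zeta$ the lemma as stated would predict a simple zero of $F_{0}$ at $s=0$, yet $\zeta(0)=-1/2\neq 0$, so these finitely many exceptional points genuinely require the separate treatment (or caveat) you supply and the paper omits.
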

Here, a pole of order $-1$ is understood to mean a zero of order $1$.
We now define $\xi_{F,k}(s)$ by
\[
\xi_{F,k}(s)=s^{m_{F}}(s-1)^{m_{F}}Q^{s}\prod_{j=1}^{r}\Gamma(\lambda_{j}s+\mu_{j})^{-k+1}\Gamma(\lambda_{j}(1-s)+\overline{\mu_{j}})^{-k}F_{k}(s).
\]
It follows from Lemma~\ref{poles_F_k} that $\xi_{F,k}$ is entire.
When $k=0$, this function coincides with $\xi_{F}(s)$ as defined in axiom~(S3).
The following proposition is a direct consequence of Proposition~\ref{FE}.
\begin{prop}
	For each $k\ge 0$,
	\begin{equation}\label{xi_FE}
		\xi_{F,k}(s)=(-1)^{k}\overline{\xi_{F,k}(1-\overline{s})}.
	\end{equation}
\end{prop}

\section{Auxiliary results on $f_k(s)$ and $F_{k}(s)$}

The function $f_k(s)$ admits the following explicit expression.
\begin{prop}\label{exf_k}
	For $k\geq 1$, we have
	\begin{equation*}
		f_k(s)=k!\sum_{\substack{a_{1}, \dots a_{k} \in \mathbb{Z}_{\ge 0} \\ a_1+2a_2+\dots+ka_k=k}}\left(-\frac{1}{2} \right)^{a_1+\dots+a_k}\prod_{l=1}^{k}\frac{1}{a_{l}!}\left(\frac{\psi_{F}^{(l-1)}(s)}{l!} \right)^{a_l}.
	\end{equation*}
\end{prop}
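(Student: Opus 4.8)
The plan is to prove Proposition~\ref{exf_k} by induction on $k$, using the recurrence $f_{k+1}(s)=f_k'(s)-\tfrac12\psi_F(s)f_k(s)$ that defines the $f_k$. The closed form is a sum over the partitions of $k$ indexed by $(a_1,\dots,a_k)$ with $a_1+2a_2+\dots+ka_k=k$, and the exponent $a_1+\dots+a_k$ counts the total number of ``factors of $-1/2$'' accumulated, i.e.\ the number of parts in the partition. This is a Fa\`a di Bruno / Bell-polynomial type identity, so the cleanest route is probably to recognise the recurrence as the one satisfied by a generating-function coefficient and verify it combinatorially rather than by brute force.

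**First I would** set up the inductive step directly. Writing the claimed formula as $f_k(s)=k!\sum_{\pi\vdash k}(-1/2)^{|\pi|}\prod_{l}\frac{1}{a_l!}\bigl(\psi_F^{(l-1)}/l!\bigr)^{a_l}$ where $\pi=(a_1,\dots,a_k)$ ranges over partitions of $k$ and $|\pi|=a_1+\dots+a_k$, I would compute both terms on the right of the recurrence. The term $-\tfrac12\psi_F(s)f_k(s)$ multiplies each partition-monomial of $k$ by an extra factor $\psi_F=\psi_F^{(0)}$, i.e.\ it increments $a_1$ by one and increments $|\pi|$ by one; this produces exactly the partitions of $k+1$ in which the new part has size $1$. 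The differentiation term $f_k'(s)$ applies the product rule: differentiating one factor $\psi_F^{(l-1)}$ turns it into $\psi_F^{(l)}$, i.e.\ it takes a part of size $l$ in a partition of $k$ to a part of size $l+1$, yielding a partition of $k+1$ whose number of parts is unchanged. I would then check that summing these two contributions reproduces, with the correct rational coefficients, every partition of $k+1$.

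**The combinatorial bookkeeping** is the only real content, so I would verify the coefficient matching explicitly. A given partition $\mu=(b_1,\dots,b_{k+1})$ of $k+1$ with $b_1\ge1$ arises from the $\psi_F$-multiplication applied to the partition $\mu^-=(b_1-1,b_2,\dots)$ of $k$, and also arises (via differentiation) from each partition $\mu^{(l)}$ of $k$ obtained by decrementing one part of size $l+1$ down to $l$, for each $l$ with $b_{l+1}\ge1$. Tracking the factorials $b_l!$ in the denominators and the factors $1/(l!)^{a_l}$, the differentiation of $\bigl(\psi_F^{(l-1)}/l!\bigr)^{a_l}$ contributes a factor $a_l$ (choice of which factor to differentiate) together with $1/(l+1)$ from the change $l!\mapsto(l+1)!$ and an adjustment of the factorial $a_{l+1}!$; these are precisely the factors that, after multiplying the surviving $k!$ by the extra $(k+1)$, rebuild the $(k+1)!$ and the correct $\prod 1/b_l!$ for $\mu$. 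The $-1/2$ count is automatic: the multiplication step adds one to $|\pi|$ and contributes the needed factor $-1/2$, while differentiation preserves $|\pi|$ and contributes no such factor, matching the two ways $|\mu|$ can equal $|\mu^-|+1$ or $|\mu^{(l)}|$.

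**The main obstacle** I expect is precisely this coefficient reconciliation: ensuring that the overlapping contributions from the two terms of the recurrence combine to give each partition of $k+1$ with multiplicity exactly matching its single occurrence in the claimed sum, with no partition over- or under-counted and with the factorial normalisations exactly cancelling. To keep this transparent I would phrase it through the exponential generating function: if one sets $g(s,x)=\sum_{k\ge0}f_k(s)x^k/k!$, the recurrence becomes the first-order ODE $\partial_x g=(\partial_s-\tfrac12\psi_F(s))g$ with $g(s,0)=1$, whose solution is $g(s,x)=\exp\bigl(-\tfrac12\int\text{-type expansion of }\psi_F\bigr)$; expanding this exponential by the multinomial theorem yields exactly the partition sum, and the factor $(-1/2)^{|\pi|}$ together with the $1/a_l!$ and $1/(l!)^{a_l}$ falls out of the standard exponential-formula expansion. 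This reduces the whole proposition to a known expansion and sidesteps the delicate index-chasing, so I would present the induction as the primary argument and remark that it is the Bell-polynomial identity underlying the exponential formula.
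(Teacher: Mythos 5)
Your induction is correct, and it is essentially the paper's own approach: the paper gives no details here, saying only that the proof is the same as that of Proposition 2.4 in \cite{K}, which is exactly the induction on the recurrence $f_{k+1}=f_k'-\tfrac12\psi_F f_k$ with the partition bookkeeping you describe. For the record, your coefficient reconciliation does close up: for a target partition $(b_1,\dots,b_{k+1})$ of $k+1$, the multiplication term supplies the fraction $b_1/(k+1)$ of the desired coefficient and the differentiation terms supply $(l+1)b_{l+1}/(k+1)$ for each $l\ge 1$, and these sum to $1$ precisely because $b_1+\sum_{l\ge 1}(l+1)b_{l+1}=\sum_{m\ge 1}mb_m=k+1$; displaying this identity is the one thing your write-up should make explicit. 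Two small cautions on your generating-function shortcut: $\partial_x g=\partial_s g-\tfrac12\psi_F(s)g$ is a transport-type PDE, not an ODE, solved by $g(s,x)=\exp\bigl(v(s+x)-v(s)\bigr)$ with $v'=-\tfrac12\psi_F$; equivalently, and most cleanly, $f_k=e^{-v}\frac{d^k}{ds^k}e^{v}$, which makes the proposition literally Fa\`a di Bruno's formula for $e^{-v}(e^v)^{(k)}$. Since $\psi_F$ is meromorphic, either treat $x$ as a formal variable (each coefficient involves only finitely many derivatives of $\psi_F$) or prove the identity on a disc avoiding the poles of $\psi_F$ and extend by the identity theorem.
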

The proof follows the same argument as that of Proposition~2.4 in \cite{K}.
By this proposition, we have
\begin{equation}\label{f_k}
f_k(s)=\left(-\frac{\psi_{F}(s)}{2}\right)^k+\Lambda_k(s),
\end{equation}
where
\begin{equation}\label{Lambda}
\Lambda_{k}(s)=k!\sum_{\substack{a_{1}, \dots a_{k} \in \mathbb{Z}_{\ge 0} \\ a_1+2a_2+\dots+ka_k=k \\ a_{1}\le k-1}}\left(-\frac{1}{2} \right)^{a_1+\dots+a_k}\prod_{l=1}^{k}\frac{1}{a_{l}!}\left(\frac{\psi_{F}^{(l-1)}(s)}{l!} \right)^{a_l}.
\end{equation}
Note that the condition $a_1 \leq k-1$ may be replaced by $a_1 \leq k-2$, since $a_1 = k-1$ is incompatible with the constraint $a_1 + 2a_2 + \cdots + ka_k = k$.
We write \eqref{f_k} as
\begin{equation}\label{f_k2}
f_k(s)=\left(-\frac{\psi_{F}(s)}{2}\right)^kA_k(s),
\end{equation}
where 
\begin{equation}\label{Akest}
A_k(s)=1+\frac{\Lambda_k(s)}{(-\frac{\psi_{F}(s)}{2})^k}.
\end{equation}
By Lemma \ref{estpsi} and \eqref{Lambda}, we obtain
\begin{equation}\label{A_kest}
A_k(s)=1+O_k((\log |s|)^{-2}) \quad (k\geq 1),
\end{equation}
and so
\begin{equation}\label{f_kest}
f_k(s)=\left(-\frac{\psi_{F}(s)}{2}\right)^k(1+O_k((\log |s|)^{-2}))
\end{equation}
for $s\in \mathscr{D}$ whose absolute value is sufficiently large when $d_{F}>0$.

We next determine the approximate location of the zeros of $f_k(s)$ for sufficiently large $|s|$.

\begin{lemma}\label{zero_f_k}
  Let $\sigma_{1}$ be sufficiently large number and $d_{F}>0$.
  All zeros of $f_{k}(s)$ with $\sigma \le 1-\sigma_{1}$ or $\sigma_{1} \le \sigma$ are located in $\mathscr{D}_{1}$,
  and the number of those in each circle is $k$.
  Let $T$ be sufficiently large.
  In the region $\{ s\mid 1-\sigma_{1}\le \sigma\le \sigma_{1}, \ |t|>T \}$, there is no zero of $f_{k}(s)$.
\end{lemma}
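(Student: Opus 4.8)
The plan is to isolate a single nonvanishing fact from which both the strip statement and the confinement of the far-left/far-right zeros to $\mathscr{D}_1$ follow, and then to treat the count ``$k$ per circle'' by a separate local argument. For the nonvanishing, I would use the factorisation (\ref{f_k2}), $f_k(s)=(-\psi_F(s)/2)^k A_k(s)$, valid on $\mathscr{D}$. By Lemma \ref{estpsi} we have $\psi_F(s)=-d_F\log s+O(1)$, so $|\psi_F(s)|\gg\log|s|>0$ for $s\in\mathscr{D}$ with $|s|$ large, while (\ref{A_kest}) gives $A_k(s)=1+O_k((\log|s|)^{-2})\neq 0$; hence $f_k(s)\neq 0$ on $\mathscr{D}$ once $|s|$ is large. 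Since $\omega$ and the $\mu_j$ are real, every pole centre $1+\tfrac{\mu_j+n}{\lambda_j}$ and $-\tfrac{\mu_j+n}{\lambda_j}$ lies on the real axis; thus for $T$ larger than the (fixed, $k$-dependent) radii, the strip $\{1-\sigma_1\le\sigma\le\sigma_1,\ |t|>T\}$ is contained in $\mathscr{D}$, where $|s|\ge|t|>T$ is large, which yields the last assertion. The same nonvanishing shows that in the regions $\sigma\le 1-\sigma_1$ or $\sigma\ge\sigma_1$ every point of $\mathscr{D}$ has $|s|\ge\sigma_1$ large, so all zeros there must lie in $\mathscr{D}_1$, i.e.\ inside the removed circles.

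It then remains to show that each such circle $C_a$ (centre $a$ a pole centre with $|a|$ large, radius $\rho=\rho(k)$) contains exactly $k$ zeros of $f_k$. By Lemma \ref{poles_f_k}, $f_k$ has in $C_a$ a single pole, of order $k$, so by the argument principle it suffices to prove that $\frac{1}{2\pi i}\oint_{C_a}\frac{f_k'}{f_k}\,ds=0$; then the zero count $Z_{C_a}$ satisfies $Z_{C_a}-k=0$. On $C_a$ the factorisation (\ref{f_k2}) gives $\frac{f_k'}{f_k}=k\frac{\psi_F'}{\psi_F}+\frac{A_k'}{A_k}$. The $A_k$-term integrates to $0$, because by (\ref{A_kest}) the values $A_k(s)$ stay in a small disc about $1$ as $s$ traverses $C_a$, so $\arg A_k$ cannot wind about the origin. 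Everything therefore reduces to $\frac{1}{2\pi i}\oint_{C_a}\frac{\psi_F'}{\psi_F}\,ds=0$, i.e.\ to showing that $\psi_F$ has equally many zeros and poles in $C_a$.

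Since $\psi_F$ has a single simple pole at $a$, with residue $\varepsilon=\pm 1$, I would write $g(s)=(s-a)\psi_F(s)=\varepsilon+(s-a)\phi(s)$ with $\phi(s)=\psi_F(s)-\tfrac{\varepsilon}{s-a}$ holomorphic on $\overline{C_a}$, so that $\frac{\psi_F'}{\psi_F}=\frac{g'}{g}-\frac{1}{s-a}$ and the claim becomes: $g$ has exactly one zero in $C_a$. After subtracting the principal part, Lemma \ref{estpsi} gives $\phi(s)=-d_F\log s+O(1)$, so $|\phi(s)|\gg\log|a|$ throughout the disc while $\rho$ is fixed; on $C_a$ this yields $|(s-a)\phi(s)|=\rho|\phi(s)|\gg\log|a|>1=|\varepsilon|$ once $\sigma_1$ (hence $|a|$) is large enough. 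Rouch\'e's theorem then shows $g=(s-a)\phi+\varepsilon$ has the same number of zeros in $C_a$ as $(s-a)\phi(s)$, namely one (the simple zero at $s=a$, as $\phi$ is zero-free on the disc). Hence $\oint_{C_a}\psi_F'/\psi_F=0$, and $f_k$ has exactly $k$ zeros in $C_a$.

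The main obstacle is precisely this last, local step on and inside the small circles. One must check that after removing the single pole term $\varepsilon/(s-a)$ coming from the cotangent (or $\Gamma'/\Gamma$) pole at $a$, the remainder $\phi$ really has size $\log|a|$ and is zero-free on the entire disc, so that Rouch\'e applies with $\varepsilon$ as the small perturbation. This is where the radii (chosen as functions of $k$) and $\sigma_1$ must be coordinated, and where one should verify that for large $|a|$ each circle isolates exactly one pole centre, so that the pole order is indeed $k$ and the count is clean.
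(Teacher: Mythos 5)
Your proposal is correct, but for the ``$k$ zeros per circle'' count it takes a genuinely longer route than the paper. The paper's whole proof is two lines: by (\ref{f_kest}), $\Re f_k(s)>0$ for $s\in\mathscr{D}$ with $|s|$ large --- indeed $-\psi_F(s)/2=\tfrac{d_F}{2}\log s+O(1)$ has argument $O(1/\log|s|)$, hence $(-\psi_F(s)/2)^k$ has argument $O_k(1/\log|s|)$, and the factor $1+O_k((\log|s|)^{-2})$ cannot destroy positivity of the real part. This single positivity statement does both jobs simultaneously: it gives the nonvanishing on $\mathscr{D}$ (which you obtain, equivalently, from the modulus bound $|f_k|\gg_k(\log|s|)^k$), and it forces the winding number of $f_k$ along each circle boundary to vanish, since the image of the circle lies in the right half-plane, a simply connected set avoiding the origin; the argument principle together with Lemma \ref{poles_f_k} then yields that the number of zeros equals the number of poles, namely $k$, in one stroke. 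Your route --- splitting $f_k'/f_k=k\,\psi_F'/\psi_F+A_k'/A_k$, killing the $A_k$ term by the winding argument, and then proving by Rouch\'e that $\psi_F$ has exactly one zero inside each removed circle --- is sound and even yields extra information (the zeros of $\psi_F$ itself), but it is work that the real-part observation makes unnecessary: a modulus lower bound on $\psi_F$ alone cannot control the winding of $(-\psi_F/2)^k$, which is exactly what forces you into the local analysis, whereas the argument bound above (available from the same Lemma \ref{estpsi}, or from (\ref{realpsi}) and the symmetry $\psi_F(s)=\psi_F(1-s)$) controls it for free.

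Two points in your local step need attention. First, you quote Lemma \ref{estpsi} to claim $\phi(s)=-d_F\log s+O(1)$, hence $|\phi(s)|\gg\log|a|$, ``throughout the disc''; but that lemma holds only on $\mathscr{D}$, i.e.\ outside the removed circle, and inside it $\psi_F$ is nothing like $-d_F\log s$. The fix is routine: $\phi(s)+d_F\log s$ is holomorphic on the closed disc ($\log s$ is holomorphic there, the disc being far from the origin), and on the boundary circle it is $O_k(1)$ by Lemma \ref{estpsi} together with $|\varepsilon/(s-a)|=1/\rho$; the maximum principle then propagates the bound to the interior, after which your Rouch\'e comparison and the zero-freeness of $\phi$ are justified. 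Second, as you yourself flag, your argument and the paper's both tacitly assume that each removed circle isolates exactly one pole centre (centres coming from different pairs $(j,n)$ can collide or cluster when the $\lambda_j$ are incommensurable); the paper glosses over this as well, so it is not a defect of your proposal relative to the paper.
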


\begin{proof}
  By \eqref{f_kest}, if $|s|$ is sufficiently large, $\Re f_{k}(s)$ is positive in $\mathscr{D}$.
  Hence, the lemma follows from the argument principle and Lemma~\ref{poles_f_k}.
\end{proof}

By Proposition \ref{exF_{k}} and \eqref{f_k2}, we can write
\begin{equation*}
F_{k}(s)=\left(-\frac{\psi_{F}(s)}{2}\right)^kA_k(s)g_k(s),
\end{equation*}
where
\begin{equation}\label{gk}
g_k(s)=F(s)+\sum_{j=1}^{k-1}\binom{k}{j}\frac{f_{k-j}(s)}{f_k(s)}F^{(j)}(s)+\frac{F^{(k)}(s)}{f_k(s)}.
\end{equation}
Note that $F(s)=1+O(2^{-\sigma})$ for $\sigma > 2$, and $F^{(k)}(s)=O_{k}(2^{-\sigma})$ for $k\ge 1$ and $\sigma > 2$.
Using these estimates and \eqref{gk}, we obtain
\begin{equation*}
g_k(s)=1+O_k(2^{-\sigma}) \quad (k\geq 1),
\end{equation*}
and so, with \eqref{A_kest},
\begin{equation}\label{Fkest}
F_{k}(s)=\left(-\frac{\psi_{F}(s)}{2}\right)^k\{1+O_k((\log \sigma)^{-2}) \} \quad(k\geq 1)
\end{equation}
for $s\in \mathscr{D}(\sigma_1)$ with sufficiently large $\sigma_{1}$.

We similarly determine the approximate location of the zeros of $F_k(s)$ for sufficiently large $|\sigma|$.

\begin{lemma}
  Let $\sigma_{1}$ be sufficiently large number.
  All zeros of $F_{k}(s)$ with $\sigma \le 1-\sigma_{1}$ or $\sigma_{1} \le \sigma$ are located in $\mathscr{D}_{1}$,
  and the number of those in each circle is $k$.
\end{lemma}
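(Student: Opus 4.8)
Since the assertion coincides with the first half of Lemma \ref{zero_f_k}, the proof is the one already indicated there, and I would simply make it explicit rather than invoke it. The whole argument rests on the asymptotic (\ref{f_kest}), which says that for $s \in \mathscr{D}$ with $|s|$ large,
\[
f_k(s) = \left(-\frac{\psi_F(s)}{2}\right)^k\{1 + O_k((\log|s|)^{-2})\}.
\]
First I would determine the argument of the leading factor. By Lemma \ref{estpsi} one has $-\psi_F(s)/2 = \frac{d_F}{2}\log s + O(1)$, and writing $\log s = \log|s| + i\arg s$ with $|\arg s| \le \pi$ shows that $\arg(-\psi_F(s)/2) = \arctan(O(1/\log|s|))$, which tends to $0$ as $|s| \to \infty$ within $\mathscr{D}$ because $d_F \ge 1$. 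Consequently $\arg f_k(s) = k\arg(-\psi_F(s)/2) + o(1) \to 0$, and in particular $\Re f_k(s) > 0$ on all of $\mathscr{D}$ once $|s|$ exceeds a bound depending on $k$.

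The one point I would treat with care is that this positivity is needed in both target regions $\sigma \ge \sigma_1$ and $\sigma \le 1 - \sigma_1$. In the former $|s|$ is automatically large and the estimate applies directly. In the latter $\arg s$ sits near $\pm\pi$, but the computation above used only the boundedness of $\arg s$ together with $\log|s| \to \infty$, and both Lemma \ref{estpsi} and (\ref{f_kest}) are valid throughout $\mathscr{D}$; hence $\Re f_k(s) > 0$ holds uniformly in both half-planes. (Alternatively, the far-left region reduces to the far-right one through $\psi_F(s) = \psi_F(1-s)$, which comes from logarithmically differentiating $H(s)H(1-s) = 1$.) This bookkeeping of uniform positivity in both far regions is the only genuine obstacle; everything else is routine.

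Finally I would count the zeros by the argument principle. In the regions $\sigma \ge \sigma_1$ and $\sigma \le 1 - \sigma_1$ the function $f_k$ is holomorphic on $\mathscr{D}$ and, by Lemma \ref{poles_f_k}, has poles only at the centres $1 + \frac{\mu_j + n}{\lambda_j}$ and $-\frac{\mu_j + n}{\lambda_j}$, each of exact order $k$; for $|s|$ large these removed disks are pairwise disjoint, each enclosing a single centre. Since $\Re f_k > 0$ on $\mathscr{D}$, every zero of $f_k$ in these regions must lie in $\mathscr{D}_1$. To count the zeros inside one such disk $C$, I would integrate $f_k'/f_k$ over $\partial C \subset \mathscr{D}$: there $f_k(\partial C)$ stays in the half-plane $\{\Re w > 0\}$ and so winds zero times about the origin, giving $Z - P = 0$ inside $C$. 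As $P = k$ by Lemma \ref{poles_f_k}, we obtain $Z = k$, i.e.\ exactly $k$ zeros in each circle, which is the claim.
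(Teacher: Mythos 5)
Taken literally, the statement you proved (about $f_k$) is a verbatim repetition of the first half of Lemma \ref{zero_f_k}, and your argument for it is correct and is essentially the paper's own proof of that earlier lemma: positivity of $\Re f_k$ throughout the far part of $\mathscr{D}$ via (\ref{f_kest}), then the argument principle against the order-$k$ poles of Lemma \ref{poles_f_k}. But the duplication you yourself noticed is a warning sign you should have heeded: the statement is a misprint, and the lemma is intended as the analogue for $F_k(s)$. This is clear both from the sentence introducing it (``We also roughly find the location of the zeros of $F_{k}(s)$ for sufficiently large $|\sigma|$'') and from the paper's proof, which invokes (\ref{Fkest}) rather than (\ref{f_kest}) and then appeals to the functional equation.

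For $F_k$, the step you flagged as ``the only genuine obstacle'' is precisely where your approach breaks down, and it cannot be repaired by bookkeeping. The estimate (\ref{Fkest}) is \emph{not} valid throughout $\mathscr{D}$: it holds only on $\mathscr{D}(\sigma_1)$, because $g_k(s)=1+O_k((\log\sigma)^{-1})$ rests on the Dirichlet-series bounds $F(s)=1+O(2^{-\sigma+\varepsilon})$ and $F^{(j)}(s)\ll_j 2^{-\sigma+\varepsilon}$, which require $\sigma$ large; in the left region $\sigma\le 1-\sigma_1$ no such expansion of $F$ exists (contrast Lemma \ref{estpsi}, which really does hold for all $s\in\mathscr{D}$ with $|s|$ large — this is why your argument is fine for $f_k$ but does not transfer). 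In fact, positivity of $\Re F_k$ on the left circles is \emph{impossible}: by Lemma \ref{poles_F_k} the poles of $F_k$ at $s=-\frac{\mu_j+n}{\lambda_j}$ have order $k-1$, so your winding-number computation would force exactly $k-1$ zeros in each left circle, contradicting the asserted count of $k$; the boundary image must wind once around the origin there. The paper's proof supplies the missing idea: handle $\sigma\ge\sigma_1$ by positivity as you did, and deduce the case $\sigma\le 1-\sigma_1$ from the functional equation $H(s)F_k(1-s)=(-1)^kF_k(s)$ of Proposition \ref{FE}. The $k$ zeros of $F_k$ in each right disk reflect to $k$ zeros in the corresponding left disk, while at the left centre the simple zero of $H$ against the order-$k$ pole of $F_k(1-s)$ produces exactly the order-$(k-1)$ pole of Lemma \ref{poles_F_k}. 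Your parenthetical alternative, $\psi_F(s)=\psi_F(1-s)$, is correct as stated but does not propagate: differentiating gives $\psi_F'(s)=-\psi_F'(1-s)$, so already $f_2$ has no such symmetry, and $F_k$ certainly has none without $H$; the functional equation of Proposition \ref{FE} is the correct substitute.
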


\begin{proof}
  For $\sigma_{1}\le \sigma$, by \eqref{Fkest} we observe that $\Re F_{k}(s)$ is positive in $\mathscr{D}$.
  Thus the lemma follows in the same manner as in the proof of Lemma~\ref{zero_f_k}.
  The case $\sigma \leq 1 - \sigma_1$ follows from the functional equation \eqref{eqFE}.
\end{proof}

\section{Proof of Theorem~\ref{th_countingzeros}}

By \eqref{realpsi} and \eqref{Fkest}, we can find positive number $\sigma_{F,k} \in \mathscr{D}$ such that $-\Re \psi_{F}(s)$, $\Re A_k(s)$, and $\Re g_k(s)$ are all positive on the line $\sigma=\sigma_{F,k}$.
Let $\mathscr{R}$ be the rectangle defined in Section~1, with such $\sigma_{F,k}$,
and let $\mathscr{L}$ be the positively oriented boundary of $\mathscr{R}$, indented along lower side by small semicircles above any poles and zeros of $\xi_{F,k}(s)$ lying on the real axis.
We obtain
\begin{equation*}
N(T;F_{k})=\frac{1}{2\pi i}\int_{\mathscr{L}} \frac{\xi_{F,k}'}{\xi_{F,k}}(s)\, ds.
\end{equation*}
The contribution from the lower side of $\mathscr{L}$ is $O_k(1)$.
By the functional equation~\eqref{xi_FE} of $\xi_{F,k}(s)$, we obtain
\begin{equation*}
	N(T;F_{k})=\frac{1}{\pi }\Im \left\{\int_{\sigma_{F,k}}^{\sigma_{F,k}+iT}+\int_{\sigma_{F,k}+iT}^{1/2+iT}\right\}\frac{\xi_{F,k}'}{\xi_{F,k}}(s)\, ds.
\end{equation*}
Therefore
\begin{equation}\label{NF_{k}}
N(T;F_{k})=\frac{\theta_{F}(T)}{\pi}+S(T;F_{k})+O_k(1),
\end{equation}
where
\begin{align*}
	S(T;F_{k})
	&=
	\frac{1}{\pi}\Im \left\{\int_{\sigma_{F,k}}^{\sigma_{F,k}+iT}+\int_{\sigma_{F,k}+iT}^{1/2+iT} \right\}\frac{F_{k}'}{F_{k}}(s)\, ds \\
	&=
	\frac{1}{\pi} \left\{\int_{\sigma_{F,k}}^{\sigma_{F,k}+iT}+\int_{\sigma_{F,k}+iT}^{1/2+iT} \right\}\, d\arg F_{k}(s).
\end{align*}
By Stirling's formula, we can immediately show that
\begin{equation*}
\frac{\theta_{F}(T)}{\pi}=\frac{d_{F}}{2\pi}T\log \frac{T}{2\pi}+c_{F}T+c_{F}'+O(T^{-1}),
\end{equation*}
where $c_{F}$ and $c_{F}'$ are constants depending only on $F$.
By the estimates in the previous section and the choice of $\sigma_{F,k}$, $-\Re \psi_{F}(s), \Re A_k(s)$, and $\Re g_k(s)$ are all positive on the line $\sigma=\sigma_{F,k}$.
Hence the total variation of the argument of each of these functions does not exceed $\pi$, which gives
\begin{equation}\label{argver}
\left|\int_{\sigma_{F,k}}^{\sigma_{F,k}+iT}d\arg F_{k}(s) \right|\leq k\pi+\pi+\pi=(k+2)\pi.
\end{equation}
Suppose that $\Re \psi_{F}(s)$ vanishes $q$ times on the horizontal segment, then
\begin{equation*}
\left|\int_{\sigma_{F,k}+iT}^{1/2+iT}d\arg (-\psi_{F}(s)) \right|\leq(q+1)\pi.
\end{equation*}
Now we should bound $q$, and $q$ can be considered as the number of zeros of the function
\begin{equation*}
\varphi(z)=\frac{1}{2}\{\psi_{F}(z+iT)+\psi_{F}(z-iT)\}
\end{equation*}
for $\Im z=0, 1/2\leq \Re z\leq \sigma_{F,k}$, so that $q\leq n(\sigma_{F,k}-1/2)$,
where $n(r)$ denotes the number of zeros of $\varphi(z)$ in the disc $|z-\sigma_{F,k}|\leq r$.
When $0\leq \alpha<1/2$, we have
\begin{equation*}
\int_{0}^{\sigma_{F,k}-\alpha}\frac{n(r)}{r}dr\geq \int_{\sigma_{F,k}-\frac{1}{2}}^{\sigma_{F,k}-\alpha}\frac{n(r)}{r}dr
\geq n\left(\sigma_{F,k}-\frac{1}{2}\right)\log \frac{\sigma_{F,k}-\alpha}{\sigma_{F,k}-1/2}.
\end{equation*}
On the other hand, Jensen's formula gives
\begin{equation*}
\int_{0}^{\sigma_{F,k}-\alpha}\frac{n(r)}{r}dr=\frac{1}{2\pi}\int_{0}^{2\pi}\log|\varphi(\sigma_{F,k}+\sigma_{F,k}e^{i\theta})|d\theta-\log|\varphi(\sigma_{F,k})|.
\end{equation*}
Therefore we obtain
\begin{equation}\label{qest}
q\ll_k \frac{1}{2\pi}\int_{0}^{2\pi}\log|\varphi(\sigma_{F,k}+\sigma_{F,k}e^{i\theta})|d\theta-\log|\varphi(\sigma_{F,k})|.
\end{equation}
Assume that $T$ is sufficiently large relative to $\sigma_{F,k}$, and $\mathscr{D}(\sigma_{F,k},T)=\{s\mid \sigma \geq 1/2, |t-T|\leq \sigma_{F,k} \}$.
By Lemma \ref{estpsi}, we have
\begin{equation*}
\psi_{F}(s)\ll \log T
\end{equation*}
in $\mathscr{D}(\sigma_{F,k},T)$.
Hence the right-hand side of~\eqref{qest} is $O(\log \log T)$,
and we obtain
\begin{equation*}
\int_{\sigma_{F,k}+iT}^{1/2+iT}d\arg (-\psi_{F}(s))=O(\log \log T).
\end{equation*}
By \eqref{Akest}, we have $A_k(s)=O_k(1)$ in $\mathscr{D}(\sigma_{F,k},T)$,
and there exists a positive constant $c$ such that $F(s)=O(t^{c})$ in the same region,
and so $g_{k}(s)=O_{k}(t^c)$ in the region.
Therefore, by the same argument, we can show that
\begin{equation*}
\int_{\sigma_{F,k}+iT}^{1/2+iT}d\arg A_k(s)=O_k(1)
\end{equation*}
and
\begin{equation*}
\int_{\sigma_{F,k}+iT}^{1/2+iT}d\arg g_k(s)=O_k(\log T).
\end{equation*}
Hence we have
\begin{equation*}
\int_{\sigma_{F,k}+iT}^{1/2+iT}d\arg F_{k}(s)=O_k(\log T)
\end{equation*}
and this implies $S(T;F_{k})=O_k(\log T)$ with \eqref{argver}.
This completes the proof.

\section{Proof of Theorem~\ref{th_RH}}

Applying Littlewood's lemma to $F_{k}(s)$ on the rectangle
$R=\{s=\sigma+it \mid 1/2\leq \sigma \leq \sigma_{F,k}, 0\leq t \leq T \}$, we obtain
\begin{align*}
&\quad\quad \frac{1}{2\pi}\int_{0}^{T}\log F_{k}(1/2+it)\, dt-\frac{1}{2\pi}\int_{0}^{T}\log F_{k}(\sigma_{F,k}+it)\, dt \\
&\quad+\frac{1}{2\pi i}\int_{1/2}^{\sigma_{F,k}}\log F_{k}(\sigma+iT)\, d\sigma-\frac{1}{2\pi i}\int_{1/2}^{\sigma_{F,k}}\log F_{k}(\sigma)\, d\sigma \\
&=\sum \mathrm{dist},
\end{align*}
where $\mathrm{dist}$ denotes the sum of the distances from the line $\sigma=\tfrac{1}{2}$ to each zero of $F_{k}(s)$ inside the rectangle.
Taking imaginary parts, we obtain
\begin{equation}\label{little}
\begin{split}
&\quad \int_{0}^{T}\arg F_{k}(1/2+it)\, dt-\int_{0}^{T}\arg F_{k}(\sigma_{F,k}+it)\, dt \\
&=\int_{1/2}^{\sigma_{F,k}}\log |F_{k}(\sigma+iT)|\, d\sigma-\int_{1/2}^{\sigma_{F,k}}\log |F_{k}(\sigma)|\, d\sigma.
\end{split}
\end{equation}
The second term on the right-hand side is $O_k(1)$.
We recall that $-\Re \psi_{F}(s), \Re A_k(s)$, and $\Re g_k(s)$ are all positive on the line $\sigma=\sigma_{F,k}$ and
\[
F_{k}(s)=\left(-\frac{\psi_{F}(s)}{2}\right)^kA_k(s)g_k(s).
\]
It follows that
\[
|\arg F_k(\sigma_{F,k}+it)|<\frac{\pi}{2}(k+2).
\]
Hence the second term on the left-hand side of~\eqref{little} is $O_k(T)$.
For the first integral on the right-hand side of~\eqref{little}, we have
\begin{align*}
\psi_{F}(s)=O(\log T), A_k(s)=O_k(1), \text{and} \ g_k(s)=O_k(T^c)
\end{align*}
in $\mathscr{D}(\sigma_{F,k},T)$.
Hence the integral is $O_{k}(\log T)$.

Combining these estimates with~\eqref{little}, and recalling that
\[
S(t;F_{k})=\frac{1}{\pi}\arg F_{k}(1/2+it),
\]
where the branch of the argument is chosen consistently with Littlewood's lemma, we obtain
\begin{equation}\label{StF_{k}}
	\int_{0}^{T}S(t;F_{k})\, dt=O_k(T).
\end{equation}
Let $N_{0}(t;F_{k})$ denote the number of zeros of $F_{k}(\tfrac{1}{2}+iu)$ in the interval $(0,t)$.
Then, by Rolle's theorem, we have
\begin{equation*}
N_{0}(t;F_{0})\leq N_0(t;F_{k})+k-1 \quad (k\geq 1).
\end{equation*}
It follows from~\eqref{NF_{k}} that
\begin{equation}\label{gapN}
N(t;F_{k})-N_0(t;F_{k})\leq \frac{\theta_{F}(t)}{\pi}+S(t;F_{k})-N_0(t;F_{0})+O_k(1).
\end{equation}
Since the argument in the previous section applies to the case $k=0$ with minor modifications, we obtain
\begin{equation}\label{zetazero}
N(t;F_{0})=\frac{\theta_{F}(t)}{\pi}+S(t;F_{0})+O(1),
\end{equation}
where
\begin{equation*}
S(t;F_{0})=\frac{1}{\pi}\left\{ \int_{2}^{2+it}+\int_{2+it}^{1/2+it}\right\}d\arg F(s).
\end{equation*}
Under RH, we have $N(t;F_{0})=N_0(t;F_{0})$.
Therefore, by \eqref{gapN} and \eqref{zetazero}, we have
\begin{equation}\label{gapNt}
N(t;F_{k})-N_0(t;F_{k})\leq S(t;F_{k})-S(t;F_{0})+O_k(1).
\end{equation}
Now let $0<T<T'$.
Since $N(t;F_{k})-N_0(t;F_{k})$ is non-negative and increasing, we obtain
\begin{align*}
&\quad\int_{0}^{T'}(N(t;F_{k})-N_0(t;F_{k}))\, dt \\
&\geq \int_{T}^{T'}(N(t;F_{k})-N_0(t;F_{k}))\, dt \\
&\geq (T'-T)(N(T;F_{k})-N_0(T;F_{k})).
\end{align*}
Substituting~\eqref{gapNt} into the above inequality and using~\eqref{StF_{k}},
we obtain
\begin{equation*}
N(T;F_{k})-N_0(T;F_{k})\leq \frac{1}{T'-T}\{O_k(T')+O(T')+O_k(T') \}.
\end{equation*}
Letting $T'\to \infty$, we conclude that
\begin{equation*}
N(T;F_{k})-N_0(T;F_{k})=O_k(1).
\end{equation*}
This completes the proof.

\section{Proofs of Theorems~\ref{th_EMF} and~\ref{th_main}}

To prove Theorem~\ref{th_EMF}, we make use of the entire fucntion $\xi_{F,k}(s)$ constructed eariler.

We next establish a Hadamard factorisation of $\xi_{F,k}(s)$.
\begin{prop}\label{factorisation}
  For each $k\ge 0$, there are constants $A_{k}$ and $B_{k}$ such that
  \[
  \xi_{F,k}(s)=e^{A_{k}+B_{k}s}\prod_{\rho_{k}}\left( 1-\frac{s}{\rho_{k}} \right)e^{\frac{s}{\rho_{k}}}
  \]
  for all $s$.
  Here the product is extended over all zeros $\rho_{k}$ of $\xi_{F,k}$.
\end{prop}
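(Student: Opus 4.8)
The plan is to identify $\xi_{F,k}(s)$ as an entire function of order exactly $1$ and then invoke the Hadamard factorisation theorem. That $\xi_{F,k}$ is entire has already been recorded, Lemma \ref{poles_F_k} being exactly what is needed to cancel the poles of $F_{k}(s)$ against those of the gamma powers and the factor $s^{m_{F}}(s-1)^{m_{F}}$. So the substance of the proof is to pin down the order. First I would establish the upper bound ``order $\le 1$'' by proving a growth estimate of the shape $|\xi_{F,k}(s)|\ll_{k}\exp(C_{k}|s|\log|s|)$ for large $|s|$. By the functional equation (\ref{xi_FE}) it suffices to treat $\sigma\ge 1/2$. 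There I would bound each factor separately: Stirling's formula controls $Q^{s}$ and the gamma powers $\Gamma(\lambda_{j}s+\mu_{j})^{1-k}\Gamma(\lambda_{j}(1-s)+\mu_{j})^{-k}$ by $\exp(O_{k}(|s|\log|s|))$; the factor $s^{m_{F}}(s-1)^{m_{F}}$ is negligible; and $F_{k}(s)$ is treated through Proposition \ref{exF_{k}}, where the $f_{k-j}(s)$ grow at most polynomially in $\log|s|$ by Lemma \ref{estpsi} (cf.\ (\ref{f_kest})), while $F(s)$ and its derivatives are bounded for large $\sigma$ and of polynomial growth in the critical strip by the convexity bound. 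Assembling these gives $\log\log M(r)\sim\log r$, i.e.\ order at most $1$.

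The main obstacle is that this factorwise estimate is literally valid only away from the real points $s=1+\frac{\mu_{j}+n}{\lambda_{j}}$ and $s=-\frac{\mu_{j}+n}{\lambda_{j}}$, where the individual gamma factors have poles and the clean bound (\ref{Fkest}) on $F_{k}$ is unavailable; these are precisely the points at which entirety of $\xi_{F,k}$ rests on delicate cancellation. I would circumvent this exactly as the region $\mathscr{D}$ is used in Section 5: first obtain the bound on circles $|s|=r$ that avoid fixed small discs about these (real, hence sparse) points, and then, since $\xi_{F,k}$ is genuinely entire, upgrade it to a bound on all of $|s|=r$ by the maximum modulus principle together with local boundedness on the excluded discs, whose radii are fixed independently of $r$. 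This is the one place where genuine care is required, because the naive product of the factorwise bounds is not even finite near those poles.

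For the matching lower bound and the correct genus I would read off from Theorem \ref{th_countingzeros} that the number of zeros $\rho_{k}$ with $|\rho_{k}|\le r$ grows like $\frac{d_{F}}{2\pi}r\log r$. Consequently the exponent of convergence of the zeros equals $1$, which forces the order of $\xi_{F,k}$ to be at least $1$, so the order is exactly $1$; moreover $\sum_{\rho_{k}}|\rho_{k}|^{-1}$ diverges while $\sum_{\rho_{k}}|\rho_{k}|^{-2}<\infty$, so the canonical product is of genus $1$ and the factors $(1-s/\rho_{k})e^{s/\rho_{k}}$ are the appropriate Weierstrass elementary factors.

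Finally, applying Hadamard's factorisation theorem to the order-$1$ entire function $\xi_{F,k}$ yields $\xi_{F,k}(s)=e^{g(s)}\prod_{\rho_{k}}(1-s/\rho_{k})e^{s/\rho_{k}}$, where $g$ is a polynomial of degree at most the order, hence at most $1$; writing $g(s)=A_{k}+B_{k}s$ gives the asserted form. Since we take the zeros $\rho_{k}$ to be nonzero (the possible zero of $\xi_{F,k}$ at the origin being absorbed once and for all into the exponential prefactor, or else absent), no monomial prefactor is needed, and the proof is complete.
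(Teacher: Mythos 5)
Your proposal is correct and shares the paper's skeleton --- entirety via Lemma \ref{poles_F_k}, a growth estimate pinning down the order, then Hadamard --- but the two key estimates are executed by genuinely different means. For the upper bound in $\sigma\ge 1/2$ the paper works algebraically: Euler's reflection formula rewrites $\Gamma(\lambda_{j}(1-s)+\mu_{j})^{-k}$ as $\pi^{-k}\Gamma(\lambda_{j}(s-1)+1-\mu_{j})^{k}\sin^{k}\pi(\lambda_{j}(1-s)+\mu_{j})$, and the sine powers are grouped with $(s-1)^{m_{F}}F_{k}(s)$ so that the zeros of the sines swallow the poles of $F_{k}$; the two resulting groups are then bounded by $e^{C|s|\log|s|}$ pointwise in the half-plane. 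You instead bound the factors of the original defining expression only on $\mathscr{D}$, where Lemma \ref{estpsi}, (\ref{f_kest}) and convexity bounds for $F^{(j)}$ apply, and recover the bound inside the excluded discs from the maximum modulus principle, using the already-established entirety. Both routes are sound; yours avoids the reflection-formula bookkeeping (and the paper's somewhat delicate side claim that its regrouped ``rest part'' is regular in $\sigma\ge 1/2$), at the cost of the disc argument, which needs the mild observation that the excluded discs can be chosen with uniformly bounded connected components. For the lower bound the paper simply computes $\xi_{F,k}(\sigma)\sim e^{\frac{d_{F}}{2}\sigma\log\sigma+O(\sigma)}$ as $\sigma\to\infty$ along the real axis, while you import Theorem \ref{th_countingzeros}; that is legitimate and non-circular (Theorem \ref{th_countingzeros} is proved earlier and independently of this proposition), but it is heavier machinery than needed --- and in fact no lower bound is required at all, since order $\le 1$ already yields the stated form: if the genus were $0$ one rewrites $\prod(1-s/\rho_{k})$ as $\prod(1-s/\rho_{k})e^{s/\rho_{k}}\cdot e^{-s\sum\rho_{k}^{-1}}$ and absorbs the last factor into $B_{k}s$.

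Two small slips, neither fatal. First, for $k\ge 1$ the gamma factors in $\xi_{F,k}$ occur to the non-positive powers $1-k$ and $-k$, i.e.\ as powers of the entire function $1/\Gamma$; at the points $s=1+\frac{\mu_{j}+n}{\lambda_{j}}$ and $s=-\frac{\mu_{j}+n}{\lambda_{j}}$ they have zeros, not poles --- it is $F_{k}$ whose poles there ruin the factorwise bound. Your obstruction is thus real but misattributed, and your maximum-modulus fix is unaffected. Second, a zero of $\xi_{F,k}$ at the origin could not be ``absorbed into the exponential prefactor,'' which never vanishes; one would either need a monomial factor $s^{m}$ in front or need to verify $\xi_{F,k}(0)\neq 0$, e.g.\ via the functional equation $\xi_{F,k}(0)=(-1)^{k}\xi_{F,k}(1)$ and a local computation at $s=1$ --- a point the paper passes over in silence as well.
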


\begin{proof}
  By the Hadamard factorisation theorem, it suffices to show that the order of $\xi_{F,k}(s)$ is $1$.
  Let $\sigma\ge 1/2$.
  By the reflection formula, we have
  \[
  \Gamma(\lambda_{j}(1-s)+\overline{\mu_{j}})=\frac{\pi}{\Gamma(\lambda_{j}(s-1)+1-\overline{\mu_{j}})\sin \pi(\lambda_{j}(1-s)+\overline{\mu_{j}})}.
  \]
  It follows that
  \[
  \xi_{F,k}(s)=s^{m_{F}}(s-1)^{m_{F}}Q^{s}\prod_{j=1}^{r}\frac{\Gamma(\lambda_{j}(s-1)+1-\overline{\mu_{j}})^k \sin^{k}\pi(\lambda_{j}(1-s)+\overline{\mu_{j}})}{\pi^{k}\Gamma(\lambda_{j}s+\mu_{j})^{k-1}}F_{k}(s).
  \]
  Note that $\sin \pi(\lambda_{j}(1-s)+\overline{\mu_{j}})$ has zeros at $s=1+\frac{\overline{\mu_{j}}\mp n}{\lambda_{j}}$ and satisfies
  \[
  |\sin \pi(\lambda_{j}(1-s)+\overline{\mu_{j}})|\le e^{\pi\lambda_{j}|t|}.
  \]
  Therefore, together with Stirling's formula, for $\sigma \ge 1/2$, $ \xi_{F,k}(s) \ll e^{C|s|\log|s|}$ with a positive constant $C$.
  By the functional equation~\eqref{xi_FE}, the same estimate holds for $\sigma<1/2$.
  Hence, for sufficiently large $|s|$, we obtain
  \[
  \xi_{F,k}(s) \ll e^{C|s|\log|s|}.
  \]
  The above estimate shows that $\xi_{F,k}(s)$ is an entire function of finite order at most $1$.
  On the other hand, since $F_k(s)$ is a finite linear combination of derivatives of $F(s)$, we have
  \[
  F_k(\sigma)=O(1)
  \quad (\sigma\to \infty).
  \]
  Hence, by another application of Stirling's formula,
  \[
  \xi_{F,k}(\sigma)\sim e^{\frac{d_{F}}{2}\sigma\log \sigma+O(\sigma)}
  \quad (\sigma\to \infty).
  \]
  This completes the proof.
\end{proof}

We now turn to the proof of Theorem~\ref{th_EMF}.
By the definition of $\xi_{F,k}(s)$ and Proposition \ref{ZandF}, we have
\begin{align*}
  &\quad
  \xi_{F,k}\left( \frac{1}{2}+it \right) \\
  &=
  (-1)^{m_{F}}i^{-k}\left( \frac{1}{4}+t^2 \right)^{m_{F}}\left( \frac{Q}{\omega} \right)^{\frac{1}{2}}\prod_{j=1}^{r}\left| \Gamma\left( \frac{\lambda_{j}}{2}+\mu_{j}+i\lambda_{j}t \right) \right|^{2k+1}Z_{F}^{(k)}(t).
\end{align*}
Hence when we put
\[
g_{F,k}(t)
=
(-1)^{m_{F}}i^{-k}\left( \frac{1}{4}+t^2 \right)^{m_{F}}\left( \frac{Q}{\omega} \right)^{\frac{1}{2}}\prod_{j=1}^{r}\left| \Gamma\left( \frac{\lambda_{j}}{2}+\mu_{j}+i\lambda_{j}t \right) \right|^{2k+1}
\]
then, by the logarithmic derivative with respect to $t$, we obtain
\[
i\frac{\xi_{F,k}'}{\xi_{F,k}}\left( \frac{1}{2}+it \right)=\frac{g_{F,k}'}{g_{F,k}}(t)+\frac{Z_{F}^{(k+1)}}{Z_{F}^{(k)}}(t).
\]
As for the function $(g_{F,k}'/g_{F,k})(t)$, it is clear that
\[
\frac{g_{F,k}'}{g_{F,k}}(t)
=
\frac{8mt}{1+4t^2}-(2k-1)\sum_{j=1}^{r}\frac{d}{dt}\Re \log \Gamma\left( \frac{\lambda_{j}}{2}+\mu_{j}+i\lambda_{j}t \right)
\]
and hence
\[
\frac{d}{dt}\frac{g_{F,k}'}{g_{F,k}}(t)\ll t^{-1}.
\]

On the other hand, by Proposition~\ref{factorisation}, we have
\[
\frac{\xi_{F,k}'}{\xi_{F,k}}(s)
=
B_{k}+\sum_{\rho_{k}}\left( \frac{1}{s-\rho_{k}}+\frac{1}{\rho_{k}} \right).
\]
Therefore
\begin{align*}
  \frac{d}{dt}\frac{\xi_{F,k}'}{\xi_{F,k}}\left( \frac{1}{2}+it \right)
  &=
  \sum_{\rho_{k}}\frac{-i}{(\frac{1}{2}+it-\rho_{k})^2} \\
  &=
  i\sum_{\gamma_{k}}\frac{1}{(t-\gamma_{k})^2}+\sum_{\substack{\rho_{k} \\ \beta_{k}\neq \frac{1}{2}}}\frac{-i}{(\frac{1}{2}+it-\rho_{k})^2} \\
  &=
  i\sum_{\gamma_{k}}\frac{1}{(t-\gamma_{k})^2} \\
  &\quad
  +\sum_{\substack{\rho_{k} \\ \beta_{k}<1-\sigma_{F,k}, \sigma_{F,k}<\beta_{k}}}\frac{-i}{(\frac{1}{2}+it-\rho_{k})^2}+O_{k}(t^{-2}).
\end{align*}
Following the argument of Matsuoka~\cite[p.~15]{M}, we have
\[
\sum_{\substack{\rho_{k} \\ \beta_{k}<1-\sigma_{F,k}, \sigma_{F,k}<\beta_{k}}}\frac{1}{(\frac{1}{2}+it-\rho_{k})^2}
\ll_{k}
\sum_{n=0}^{\infty}\frac{1}{(t+m)^2}
\ll_{k}
\int_{0}^{\infty}\frac{dx}{(t+x)^2}
\ll
\frac{1}{t}.
\]
Thus we have
\[
\frac{d}{dt}\frac{\xi_{F,k}'}{\xi_{F,k}}\left( \frac{1}{2}+it \right)
=
i\sum_{\gamma_{k}}\frac{1}{(t-\gamma_{k})^2}+O_{k}(t^{-1}).
\]
This yields
\[
\frac{d}{dt}\frac{Z_{F}^{(k+1)}}{Z_{F}^{(k)}}(t)
=
-\sum_{\gamma_{k}}\frac{1}{(t-\gamma_{k})^2}+O_{k}(t^{-1}),
\]
which is the desired formula.

By Theorem~\ref{th_EMF}, we have
\begin{align*}
  \frac{d}{dt}\frac{Z_{F}^{(k+1)}}{Z_{F}^{(k)}}(t)
  &<
  -\sum_{0<\gamma_{k}<t}\frac{1}{(t-\gamma_{k})^2}+At^{-1} \\
  &<
  t^{-1}(A-N_{0}(T;F_{k})t^{-1})
\end{align*}
for some constant $A$.
From Theorem~\ref{th_countingzeros} and Theorem~\ref{th_RH}, this is negative for large $t$.
Therefore, $(Z_{F}^{(k+1)}/Z_{F}^{(k)})(t)$ monotonically decreases between any two consecutive zeros of $Z_{F}^{(k)}(t)$ for large $t$.
Consequently, $(Z_{F}^{(k+1)}/Z_{F}^{(k)})(t)$ has exactly one zero between any two consecutive zeros of $Z_{F}^{(k)}(t)$ for large $t$, and the same holds for $Z_{F}^{(k+1)}(t)$.
This completes the proof of Theorem~\ref{th_main}.

\section*{Acknowledgements}
This work was supported by JSPS KAKENHI Grant Number 25K17245.

\end{document}